
\documentclass[preprint,12pt,authoryear]{elsarticle}




\usepackage{amssymb}



\usepackage{amssymb,amsmath,bm,mathrsfs,amsfonts,graphicx,multirow,stmaryrd,color}
\usepackage{times}
\usepackage{natbib,dsfont}
\usepackage{amsthm}
\usepackage{booktabs}
\usepackage{epstopdf}
\usepackage{graphics}
\allowdisplaybreaks[1]

\newcommand{\bA}{{\bf A}}
\newcommand{\bB}{{\bf B}}
\newcommand{\bE}{{\bf E}}

\newcommand{\bx}{{\bf x}}

\newcommand{\um}{\underline{m}}

\newcommand{\bo}{{\bf 0}}
\newcommand{\bF}{{\bf F}}

\newcommand{\bI}{{\bf I}}

\newcommand{\bY}{{\bf Y}}
\newcommand{\by}{{\bf y}}
\newcommand{\bX}{{\bf X}}

\newcommand{\bZ}{{\bf Z}}

\newcommand{\bW}{{\bf W}}

\newtheorem{theorem}{Theorem}[section]

\newtheorem{remark}{Remark}[section]

\newtheorem{corollary}{Corollary}[section]
\newtheorem{proposition}{Proposition}[section]

\newcommand{\bqa}{\begin{eqnarray}}
\newcommand{\eqa}{\end{eqnarray}}
\newcommand{\bqn}{\begin{eqnarray*}}
\newcommand{\eqn}{\end{eqnarray*}}
\newcommand{\be}{\begin{equation}}
\newcommand{\ee}{\end{equation}}
\newcommand{\non}{\nonumber\\}

\newcommand{\rE}{{\rm E}}
\newcommand{\tr}{{\rm tr}}

\newcommand{\md}{\mbox{d}}

\numberwithin{equation}{section}
\theoremstyle{plain}

\begin{document}

\begin{frontmatter}

\title{Generalized  Four Moment  Theorem with  an application to the  
CLT  for the spiked eigenvalues of high-dimensional general Fisher-matrices.}



\author[els]{Dandan Jiang\corref{cor1}}
\cortext[cor1]{Corresponding author: Dandan Jiang}
\ead{jiangdandan@jlu.edu.cn}
\fntext[myfootnote]{Supported by Project 11471140 from NSFC.}
\author{Zhiqiang Hou}
\ead{houzq399@nenu.edu.cn}
\author{Zhidong Bai}
\ead{baizd@nenu.edu.cn}

\address{School of Mathematics and Statistics, \\
Xi'an Jiaotong University \\
No.28, Xianning West Road,\\
Xi'an {\rm 710049}, China.}

 \address{KLASMOE and School of Mathematics and Statistics, \\
Northeast Normal University, \\
No. 5268 People's Street,\\
Changchun {\rm 130024}, China.}

\begin{abstract}
 The universality  for the local spiked eigenvalues  
  is a powerful tool to deal with the problems of the asymptotic law for the bulks  of spiked eigenvalues of high-dimensional generalized Fisher matrices. In this paper, we focus on a more generalized spiked Fisher matrix, where 
  $\Sigma_1\Sigma_2^{-1}$
is free of the  restriction of diagonal  independence,
and both of  the spiked eigenvalues and the population 4th moments  are not necessary required to be bounded. 
 By reducing the matching  four moments  constraint to a tail probability,  we propose a  {\it Generalized Four Moment Theorem} (G4MT) for the bulks  of spiked eigenvalues of high-dimensional generalized Fisher matrices, which 
  shows that the limiting distribution of  the spiked  eigenvalues of a generalized spiked Fisher matrix is independent of the actual distributions of the samples provided to satisfy the our relaxed assumptions. 
  Furthermore, as an illustration, we also  apply the G4MT to  the Central Limit Theorem  for the spiked  eigenvalues of generalized spiked Fisher matrix, which removes the strict condition of the diagonal block independence given in \cite{WangYao2017} and extends their result 
 to a wider usage without  the requirements of the bounded 4th moments and the diagonal block independent structure, meeting the actual cases better.
\end{abstract}

\begin{keyword}
Generalized  Four Moment  Theorem \sep Spiked Model \sep Large-dimensional Fisher Matrices \sep Central Limit Theorem
\MSC 60B20, 62H25 \sep  60F05.
\end{keyword}

\end{frontmatter}

\section{Introduction} \label{Int}

We study the universality  for the bulks  of spiked eigenvalues of high dimensional generalized Fisher matrices, which plays an important role in many fields  of modern science, such as 
wireless communications,  gene expression and so on.
To formulate the problem in a general form,  let $\Sigma_1$ and $\Sigma_2$ be any covariance matrices from arbitrary two $p$-dimensional populations. Let $S_1$ and $S_2$ denote the corresponding 
sample covariance matrices with sample sizes $n_1$ and $n_2$. 
If  the covariance matrix of the observed vector satisfies $\Sigma_2=\Sigma_1+\Delta$,   where
$\Delta$ is 
a $p \times p$ matrix of  finite rank $M$, then the matrix $\bF=S_1S_2^{-1}$ is so-called a spiked Fisher matrix.
In the present paper, the universality  for the bulks  of spiked eigenvalues of the generalized Fisher matrix $\bF$ is established under the more general assumptions detailed as below: First, 
  the  spectrum of $\bF$ is formed as 
\begin{equation}
\beta_{p,1}, \cdots,  \beta_{p,j},\cdots,\beta_{p,p}\label{array}
\end{equation}
in descending order
and let $\beta_{p,j_k+1}= \cdots= \beta_{p, j_k+m_k}\triangleq \alpha_k$ with $j_k's$ being  arbitrary  ranks in the array 
(\ref{array}), then  the spiked eigenvalues  $\alpha_1, \cdots, \alpha_K$ with multiplicity $m_k, k=1,\cdots,K$  are lined arbitrarily in groups among all the eigenvalues, satisfying $m_1+\cdots+m_K=M$, a fixed integer.
In addition,  the spiked eigenvalues are allowed  to be infinity.  Under these general assumptions, the matrix $\bF$ is called a generalized spiked Fisher matrix.

Our main goal is to construct a Generalized Four Moment Theorem (G4MT)  to shows the universality of the asymptotic law for  the bulks  of spiked eigenvalues of generalized Fisher matrix $\bF$.
On the basis of this preliminary result, the central limit theorem (CLT) for the spiked eigenvalues of generalized Fisher matrix with relaxed  assumptions is also provided as an application.  
%

 Our work  mainly arises from two aspects of  impressive related works: universality and spiked model. There exists a number of literatures on both  topics. On the one hand,
  the study of universality  for the local spectral statistics of random matrices starts from \cite{Wigner1958}, \cite{Dyson1970}, and \cite{Mehta1967}, which provide a new and simplified technique to prove one result suitable for Non-Gaussian case, that is, sufficient to show that the same conclusion hold for the Gaussian case if the universality is true.
  Further catalytic works are also introduced in \cite{Soshnikov1999}, \cite{BP2005}, \cite{Erdos2010a,Erdos2010b}.
   A more general and recent work of universality is 
    \cite{TaoVu2015}, which  proves the universality for the local spectral statistics of the Wigner matrix by the {\it Four Moment Theorem}.  
 This theorem assumes the corresponding equality of the moments up to the 4th order  between the entries from  the complex  standardized Gaussian ensemble and the ones from  the complex  standardized Non-Gaussian ensemble. However, they also  conjectured that
   the number of matching moments  may be  reduced in their theorem. 
   Inspired by these ideas, we have extended it to a G4MT (Generalized Four Moment Theorem) with the relaxed 4th moment constraint in \cite{JiangBai2018}, 
   which shows the universality of the asymptotic law for the local spectral statistics of generalized spiked covariance matrices. On the basis of the previous works, we further develop
   the G4MT for the bulks of the spiked eigenvalues of the high-dimensional
  spiked Fisher matrices in the present paper, which will have a wider range of usage
  in statistical analysis.

   On the other hand, our work relies on  spiked population model,
   a popular theoretical tool in statistical analysis, which has a close relationship with  principal component analysis (PCA) and factor analysis (FA). It was first put forward 
   in  \cite{Johnstone2001} in the setting of high dimensionality $p$ compared to the sample size $n$. It is assumed that the population covariance matrix has a structure of a finite-rank perturbation of identity matrix in their work. For a small finite-rank $M$, the empirical spectral distribution of the corresponding sample covariance matrix still follows 
   the standard Mar\v{c}enko-Pastur law. But the limiting behavior of bulks of spiked eigenvalues are different from the ones of a covariance without spikes. Then,  there has been a lot of work  focused  on the research of 
 the asymptotic properties of the spiked eigenvalues of high-dimensional 
 covariance matrices, including  
  \cite{Baik2005}, \cite{BaikSilverstein2006}, \cite{Paul2007}
and \cite{BaiYao2008}. For a further step,  
  \cite{BaiYao2012} expand the structure of covariance to  a more general spiked covariance matrix with  the block independence and finite 4th moment condition. 
 Some related references are also  devoted to the investigations on PCA or FA, which can be seen as another way of   understanding the spiked model.
      For examples,   \cite{BaiNg2002, HoyleRattray2004,  Nadler2008, 
       Onatski2009, Onatski2012, JungMarron2009, Shen2013, BerthetRigollet2013, Birnbaum2013}  and so on. 
\cite{FanWang2015} provided the asymptotic distributions of 
the sample eigenvalues  and eigenvectors of the  spiked covariance matrix,
 which is a natural extension of \cite{Paul2007}.
 \cite{CaiHanPan2017} constructed  the limiting normal distribution for the spiked  eigenvalues of the sample covariance matrices, which is depended on the population eigenvectors and finite 4th moment.
  A closer work is our recent  results   in 
   \cite{JiangBai2018},  
  which
   extended  the study on the spiked eigenvalues of a  covariance matrix  to a more general case 
   by their G4MT for the general covariance matrix.


    
    However, the above works are  more focused on
     the high-dimensional 
    spiked covariance matrices, and few of them are  referred to spiked Fisher matrices. As well known, the Fisher matrices 
    have an important position  in
multivariate statistical analysis, because many hypothesis testing problems
can be involved with a function of  the eigenvalues of Fisher matrices. To enumerate,  tests on the equality of means or two population covariance matrices, the likelihood ratio criterion for testing regression coefficients in linear regression, the canonical correlation analysis and so on. 
A closest result of local spiked eigenvalues of  high-dimensional Fisher matrices is provided in \cite{WangYao2017}. They established CLT for the extreme eigenvalues  of high-dimensional spiked 
Fisher matrices under the simplified assumption that  $\Sigma_1\Sigma_2^{-1}$ is a rank $M$ perturbation of identity matrix with diagonal independence and bounded 4th moment. 
Therefore,
inspired by   these works,
we consider the limiting behavior of bulks of spiked eigenvalues of high-dimensional spiked Fisher matrices  in a generalized case that  $\Sigma_1\Sigma_2^{-1}$
is free of the  restriction of diagonal  independence,
and both of  the spiked eigenvalues and the population 4th moments  are not necessary required to be bounded. 
Under these relaxing constraints,  
a G4MT is established for the spiked eigenvalues of generalized Fisher matrix $\bF$  with a relaxed  4th-moment constraint, which shows the universality of the asymptotic law for the local eigenvalues of generalized spiked Fisher matrices. 
Then, by applying  the G4MT, the CLT for the spiked  eigenvalues of generalized spiked Fisher matrices is also proposed 
   under some relaxed   assumptions, including  the arbitrary form of the definite matrix $\Sigma_1\Sigma_2^{-1}$
without  diagonal  independence,
the spiked eigenvalues and the population 4th moments not necessarily bounded, free of population distributions. 

Compared  with the existing research, our main contribution mainly manifests in the following several aspects:
One one side,  we establish a G4MT to prove the the universality of the asymptotic law  for  the spiked eigenvalues of generalized Fisher matrices by replacing the condition of matching the 4th moment 
with a tail probability condition, which is a regular and necessary condition in the weak convergence of the largest eigenvalue. 
Thus,  it weakens the condition of  matching moments up to the 3th order,  and even up to the second moments
for the  symmetric populations. Moreover, it avoids  the  rigorous $C_0$ condition with  uniform exponential decay  and the partial derivative operations  of the whole  large dimensional random matrices in \cite{TaoVu2015}, instead it only need to 
study the universality  of a limiting law for the eigenvalues of a low-dimensional $M \times M$ matrix.
In addition, by the G4MT,
our conclusion can be free of the population distribution and the constraint of bounded 4th moment. 
One the other side,  we apply the G4MT to derive the CLT for the  spiked eigenvalues of generalized Fisher matrices under our relaxed assumptions. So that we can get rid of the diagonal assumption or the diagonal  block independent  assumption for the  matrix $\Sigma_1\Sigma_2^{-1}$ and replace them with more general ones. 
It makes sense in practical terms, because  it permits that  the spiked eigenvalues may be generated from the variables partially  dependent on the ones corresponding to the non-spiked eigenvalues. 
With  the general form of the Fisher matrix,  we also can provide
   a few pairs of thresholds for bulks of spiked eigenvalues. 
   Furthermore, the population spiked eigenvalues of  
   the Fisher matrices  in our work are allowed to be infinity. These relaxed conditions  make the results
   more applicable to a wider usage and closer to the actual situation.
   

The rest of the paper is organized as follows. The focused problem  is described   and some preliminaries are prepared in Section~\ref{Pre}; Then, it is  stated formally that our main results in Section~\ref{New}, including the G4MT for 
bulks of spiked eigenvalues of high-dimensional generalized Fisher matrices,  and its application to the CLT for spiked eigenvalues of a generalized spiked Fisher matrix.
 Simulation study  are provided in Section~\ref{Sim}. 
 Finally,  we 
 sketch the main ideas of the proofs in the Supplement.


\section{Problem Description and  Preliminaries}  \label{Pre}

Consider $\Sigma_1^{1 \over 2}\bX$ and  $\Sigma_2^{1 \over 2}\bY$ as two random samples from two independent $p-$ dimensional population, where
\[\bX=(\bx_1,\cdots,\bx_{n_1})=\left(x_{ij}\right), 1\leq i \leq p, ~ 1\leq j \leq n_1\]
and 
\[\bY=(\by_1,\cdots,\by_{n_2})=\left(x_{il}\right), 1\leq i \leq p, ~ 1\leq l \leq n_2.\]
are two independent  $p$-dimensional arrays with  components having zero mean  and  identity variance.
Then $\Sigma_1$ and $\Sigma_2$ are  the relevant population  covariance matrices.

Define
\[T_p=\Sigma_1^{1 \over 2}\Sigma_2^{-{1 \over 2}}\]
and assume that the spectrum of  $T_p^*T_p$ is listed in descending order as  in  
(\ref{array}), some of which
are the population spiked  eigenvalues  lined arbitrarily in groups among all the eigenvalues. 
Denote these spiked  eigenvalues as 
$\alpha_1, \cdots, \alpha_K$ with multiplicity $m_k, k=1,\cdots,K$, respectively, satisfying $m_1+\cdots+m_K=M$, a fixed integer.

 Define
the corresponding  sample covariance matrices of the two observations, {\rm i.e.}
\begin{equation}S_1
=\Sigma_1^{1 \over 2}
\left(\frac{1}{n_1}\bX\bX^*\right)\Sigma_1^{1 \over 2}\label{S1}
\end{equation}
and
\begin{equation}S_2
=\Sigma_2^{1 \over 2}
\left(\frac{1}{n_2}\bY\bY^*\right)\Sigma_2^{1 \over 2},\label{S2}
\end{equation}
respectively.
In this paper, we will investigate  the eigenvalues of the generalized Fisher matrix 
\[\bF=S_1S_2^{-1}=\Sigma_1^{1 \over 2}\tilde {S_1}\Sigma_1^{1 \over 2}\Sigma_2^{-{1 \over 2}}\tilde {S_2}^{-1}\Sigma_2^{-{1 \over 2}}\]     
where 
$\tilde {S_1}=1/n_1\bX\bX^*$ and $\tilde {S_2}={1}/{n_2}\bY\bY^*$  are the standardized sample covariance matrices, respectively. It is well known that the eigenvalues of $\bF$ are the same of the matrix with the form (Still use $\bF$ for brevity, if no confusion):
\begin{equation}
\bF=T_p^{*}\tilde {S_1}T_p\tilde {S_2}^{-1}.\label{F}
\end{equation}
Define the singular value decomposition of $T_p$ as
 \be T_p= V\left(
\begin{array}{cc}
 D_1^{1 \over 2} & \bo    \\
 \bo & D_2^{1 \over 2}     
\end{array}
\right)U^{*}
\label{UDU}
\ee
where $U, V$ are  unitary (orthogonal for complex case) matrices,   $D_1$ is a diagonal matrix of the $M$ spiked eigenvalues of  the generalized spiked Fisher matrix $\bF$
 and $D_2$ is the diagonal matrix of the non-spiked ones  with bounded components.

 Let $J_k$ be the set of  ranks  of  $\alpha_k$ with multiplicity $m_k$ among all the eigenvalues of $T_p^*T_p$, {\rm i.e.}
\[J_k=\{ j_k+1,\cdots, j_k+m_k\}.\]
Set the sample eigenvalues in the descending order  as $\{l_{p,j}(\bA)\}$ 
 for a $p \times p$ matrix $\bA$.
Then,  the  sample eigenvalues of  the generalized spiked  Fisher matrix $\bF$ are also arranged in the descending order as 
\[l_{p,1}(\bF), \cdots,  l_{p,j}(\bF), \cdots,  l_{p,p}(\bF).\]
Therefore, some  assumptions similar to the ones in Jiang and Bai (2018) are presented  in order to figure out the limiting distribution of the spiked eigenvalues of a generalized spiked Fisher matrix $\bF$.

\begin{description}
\item[ Assumption [A\!\!]] The two double arrays $\{x_{ij}, i, j = 1,2,...\} $and $\{y_{ij}, i, j = 1,2,...\}$ consist of independent and identically distributed ({\rm i.i.d.}) random variables with mean 0 and variance 1.
 Furthermore, $E x_{ij}^2=0$ and $E y_{ij}^2=0$  hold for the complex case if the variables and $T_p$ are complex. 
\item[ Assumption~[B\!\!]]  Assume $\lim\limits_{\tau \rightarrow \infty}\tau^4 {\rm P}\left(|x_{11}| >\tau \right)=0$  and
$\lim\limits_{\tau \rightarrow \infty}\tau^4 {\rm P}\left(|y_{11}| >\tau \right)$ $= 0$
for the {\rm i.i.d.} samples, where both of the 4th-moments are not necessarily required to exist.
\item[ Assumption~[C\!\!]] The matrix $T_p=\Sigma_1^{1 \over 2}\Sigma_2^{-{1 \over 2}}$ is non-random and the singular values of $\{T_p^*T_p\}$ are uniformly bounded with at most a finite number of exceptionals. 
Moreover,  the empirical spectral distribution (ESD) of $\{T_p^*T_p\}$,  
$H_n$, tends to proper probability measure $H$  if  $\min( p,n_1,n_2)\rightarrow \infty$.
\item[ Assumption~[D\!\!]]  
 Suppose that 
 \be
 \max\limits_{t,s} |u_{ts}|^2\beta_xI(|x_{11}|<\eta_{n_1}\sqrt{n_1}) \rightarrow 0 \label{CondU1}
 \ee
  \be
 \max\limits_{t,s} |v_{ts}|^2 \beta_yI(|y_{11}|<\eta_{n_2}\sqrt{n_2}) \rightarrow 0 \label{CondU2}
 \ee
  where 
   $\beta_x=\sum\limits_{t=1}^pu_{ts}^4\rE|x_{11}|^4-3$ and $\beta_y=\sum\limits_{t=1}^pv_{ts}^4\rE|y_{11}|^4-3$
   for the considered  the $s$th sample spiked eigenvalue, 
  $u_{ts}$ and $v_{ts}$ are the entries of the matrices $U_1$ and $V_1$, and 
  $U_1, V_1$ are  the first $M$ columns  of matrix $U$ and $V$ defined in (\ref{UDU}), respectively.
   If the matrix $\Sigma_1\Sigma_2^{-1}$ is a diagonal matrix,  it is obvious  that $\beta_x=\rE|x_{11}|^4-3$ and $\beta_y=\rE|y_{11}|^4-3$.
\end{description}


\begin{description}
\item[ Assumption~[E\!\!]]  
Assuming that
\be
 c_{n_1}=p/n_1 \in (0, \infty)\quad \text{and} \quad  c_{n_2} =p/n_2 \in (0,1),\label{limitS}
 \ee
and  $\min( p,n_1,n_2)\rightarrow \infty$ is considered throughout the paper.
Then the spiked eigenvalues of the matrix  $\bF$, $\alpha_1,\cdots, \alpha_K$,  with multiplicities $m_1,\cdots,$  $m_K$  laying out side the support of $H$, satisfy  $\psi'(\alpha_k)>0, $  for $1\le k \le K$, where 
\be
\psi(\alpha_k)=\frac{\alpha_k\left(1-c_{1}\int \frac{t}{t-\alpha_k}\mbox{d}H(t)\right)}{1+c_{2}\int\frac{\alpha_k}{t-\alpha_k}\mbox{d}H(t)}.\label{psik}
\ee
is the phase transition of spiked eigenvalues of generalized spiked Fisher matrix  provided in \cite{Houetal2019}.
 \end{description}

\subsection{Phase Transition of Generalized Spiked Fisher matrix}  \label{Sec2.1}

In this respect, 
 \cite{WangYao2017} proposed the phase transitions of a simplified Fisher matrix, which assumes that $T_p^*T_p$ 
 has a
diagonal block structure; that is,
 the spiked eigenvalues are generated by  random variables independent on the ones for  non-spiked eigenvalues, 
which  is not common  in practice.
 For a more general case, 
  the phase transitions of spiked eigenvalues of generalized spiked Fisher matrix are provided by \cite{Houetal2019}, 
which extends the result of  \cite{WangYao2017}  to a more general case that the matrix $T_p^*T_p$ is  arbitrary symmetric nonnegative definite and both of the spiked eigenvalues  and the 4th moments   may 
not necessarily required to be bounded, meeting the actual cases better. The details are depicted as follows:
   for each spiked eigenvalue 
$\alpha_k$ with multiplicity $m_k, k=1,\cdots,K$ associated  with sample eigenvalues 
$\{l_{j}(\bF), j \in J_k\}$, we have
\begin{proposition}\label{PT}
Under the Assumption~$\bA \sim \bE$, 
  the generalized spiked Fisher matrix $\bF = S_1S_2^{-1}$ is defined in (\ref{F}) with the sample covariance matrices $S_1$ and $S_2$ given in (\ref{S1})-(\ref{S2}).
For any spiked eigenvalue $\alpha_k, (k=1,\cdots,K)$, let
  \[\rho_k =
\left\{
\begin{array}{cc}
  \psi(\alpha_k),& \mbox{ if } \psi'(\alpha_k)>0;  \\
   \psi(\underline\alpha_k), &   \mbox{ if there exists } \underline\alpha_k \mbox{ such that } \psi'(\underline\alpha_k)=0
   \\ &\mbox{ and }\psi'(t)<0,  \mbox{ for all } \alpha_k\le t<\underline{\alpha}_k;
\\
   \psi(\overline\alpha_k), &   \mbox{ if there exists } \overline\alpha_k \mbox{ such that } \psi'(\underline\alpha_k)=0\\
   & \mbox{ and }
 \psi'(s)<0, \mbox{ for all } \overline{\alpha}_k<s\le\alpha_k,
\end{array}
\right.
\]
where $\psi(\alpha_k)$ is defined in (\ref{psik}).
Then, 
 it holds that for all $ j \in J_k$, $\{l_{p,j}\}$ almost surely that  $\{l_{p,j}/\rho_k-1\}$  converges to 0.
 \end{proposition}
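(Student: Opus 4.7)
The plan is to reduce the eigenvalue equation for the spiked eigenvalues of $\bF$ to a low-dimensional determinantal identity and then extract the limit using the known Marchenko--Pastur-type results for the non-spiked part. Using the singular value decomposition $T_p = V\,\mbox{diag}(D_1^{1/2},D_2^{1/2})U^{*}$ in (\ref{UDU}), separate the $M\times M$ spiked block $D_1$ from the bulk block $D_2$ whose ESD converges to $H$. For any candidate sample eigenvalue $l$ lying outside the support of the limiting spectrum of the non-spiked part of $\bF$, the characteristic equation $\det(l\bI - \bF) = 0$ can be reduced by the Weinstein--Aronszajn (matrix-determinant) identity to the $M$-dimensional equation $\det(\bI_M - D_1 \bR_n(l)) = 0$, where $\bR_n(l)$ is an $M\times M$ random matrix built from resolvents of the companion non-spiked Fisher matrix.

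The next step is to show that the entries of $\bR_n(l)$ concentrate almost surely around explicit deterministic limits. This is a standard quadratic-form concentration argument: the diagonal entries converge to a scalar function of $l$ expressed through the Stieltjes transform of the limiting ESD of the non-spiked Fisher matrix, while off-diagonal entries vanish because the columns of $U_1, V_1$ appearing in Assumption~[D] are orthogonal to those corresponding to $D_2$. Substituting these limits into the determinantal equation and comparing with the definition of $\psi$ in \eqref{psik}, the limiting scalar equation reads $l = \psi(\alpha)$ near each spike $\alpha = \alpha_k$. When $\psi'(\alpha_k) > 0$ this equation is locally invertible, so each of the $m_k$ sample eigenvalues indexed by $J_k$ must converge almost surely to $\psi(\alpha_k) = \rho_k$; the multiplicity is automatic, since the $M\times M$ determinantal equation has exactly $m_k$ roots clustering at $\rho_k$.

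Third, handle the subcritical regimes where $\psi'(\alpha_k) \le 0$. There the corresponding spikes fail to separate from the bulk at the level $\psi(\alpha_k)$ and are instead trapped at the nearest critical point of $\psi$. A monotonicity and continuity analysis of $\psi$ on the interval where $\psi'$ changes sign identifies the sticking level as either $\psi(\underline{\alpha}_k)$ or $\psi(\overline{\alpha}_k)$, according to which side the derivative crosses zero; this matches the three cases in the definition of $\rho_k$ given in the statement.

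The main obstacle is making the above concentration steps work under the relaxed Assumptions~B and C, where the fourth moments need not exist and the spiked eigenvalues may diverge. The lack of a uniform fourth moment is overcome by truncating $x_{ij},y_{ij}$ at level $\eta_n\sqrt{n}$; the tail hypothesis $\tau^{4}\mathrm{P}(|x_{11}|>\tau)\to 0$ guarantees that the truncation error is asymptotically negligible in every quadratic form $\bx^{*}\bA\bx$ appearing in $\bR_n(l)$. Unbounded $\alpha_k$ forces one to interpret $\int t\,\mathrm{d}H(t)/(t-\alpha_k)$ as a limit and to maintain uniformity in the resolvent estimates as $\alpha_k \to \infty$. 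Since the proposition is imported from \cite{Houetal2019}, the final step would be to invoke their truncation and concentration machinery rather than rederive it, as the present paper uses Proposition~\ref{PT} only as a phase-transition input to the G4MT and the CLT.
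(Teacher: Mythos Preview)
The paper does not give an independent proof of Proposition~\ref{PT}; it is quoted from \cite{Houetal2019} as a preliminary result, exactly as you acknowledge in your last paragraph. What the paper \emph{does} carry out, in the proof of Theorem~\ref{CLT}, is precisely the determinantal reduction you sketch: the SVD (\ref{SVD}) is inserted into $|l_{p,j}\bI-\bF|=0$, a $2\times 2$ block Schur complement is taken, the in--out exchange $Z(Z'Z-\lambda\bI)^{-1}Z'=\bI+\lambda(ZZ'-\lambda\bI)^{-1}$ is applied, and then the quadratic forms are replaced by their deterministic limits via Lemma~2.7 of \cite{BaiSilverstein1998}, yielding the scalar identity (\ref{0eqa}), $\psi_k+c_2\psi_k^2 m(\psi_k)+\psi_k\underline{m}(\psi_k)\alpha_k=0$, which is equivalent to $l=\psi(\alpha_k)$ in your notation. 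So your plan and the paper's machinery coincide on the distant-spike case $\psi'(\alpha_k)>0$.

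Two minor points. First, your phrasing ``off-diagonal entries vanish because the columns of $U_1,V_1$ \ldots\ are orthogonal to those corresponding to $D_2$'' is not quite the mechanism: orthogonality alone does not kill the cross terms; one still needs the mixed quadratic-form bounds (the paper's (\ref{eq01})--(\ref{eq02})) together with the independence of $(U_1^*\bX,V_1^*\bY)$ from $(U_2^*\bX,V_2^*\bY)$. Second, the subcritical (``sticking'') cases $\psi'(\alpha_k)\le 0$ are not addressed anywhere in the present paper---Assumption~{\bf E} explicitly imposes $\psi'(\alpha_k)>0$---so for those branches of the definition of $\rho_k$ you would genuinely have to go to \cite{Houetal2019}, as you say.
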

 
  \begin{remark}
 Since the convergence of $c_{n_1} \to c_1$, $c_{n_2} \to c_2$  and $H_n \to H$ may be very slow, the difference $\sqrt{n}(l_{p,j} -\psi_k)$ may not have a limiting distribution. Furthermore, from a view of statistical inference, $H_n$ can be treated as the subject population, and $c_{n_1}, c_{n_2}$ can be viewed as the ratio of dimension to sample sizes for the subject sample. So, we usually use 
  \be
  \psi_n(\alpha_k)=\frac{\alpha_k\left(1-c_{n_1}\int \frac{t}{t-\alpha_k}\mbox{d}H_n(t)\right)}{1+c_{n_2}\int\frac{\alpha_k}{t-\alpha_k}\mbox{d}H_n(t)},\label{phink}
  \ee
   instead of $\psi_k$ in $\rho_k$,
and $n$ denotes $(n_1,n_2)$, especially the case of CLT. Then, 
we only require $c_{n_1}=p/n_1; c_{n_2}=p/n_2$, and both the dimensionality $p$ and the sample sizes $n_1, n_2$ grow to infinity  simultaneously, but not necessarily  in proportion. Moreover, the approximation  that
$\{ {l_{j}}/{\rho_k}-1\}$ almost surely converges to 0 still holds  for all $ j \in J_k$.
 \label{RP1}
 \end{remark}



\section{Main Results}  \label{New}

In this section, two facts are going to be proved. The first is the G4MT for generalized Fisher matrix,  which states that 
the limiting distribution of   spiked  eigenvalues of a generalized spiked Fisher matrix is independent of the actual distributions of two samples provided to satisfy the Assumptions $\bA \sim {\bf E}$. The second is the CLT for spiked eigenvalues of a generalized spiked Fisher matrix $\bF$, which can be equivalently obtained by two independent $p$-dimensional Gaussian samples 
 by the G4MT for generalized Fisher matrix.
Before  we start, some explanations of the truncation procedure are given as following.

 \subsection{Truncation and Centralization}  \label{Sec3.1}

Since  $\tau=\eta\sqrt{n_2} \rightarrow \infty$ for every fixed $\eta>0$, it follows  by the Assumption~$B$ that
 \[\eta^4n_2^2 {\rm P} \left(|y_{11}|>\eta\sqrt{n_2}\right) \rightarrow 0,\]
 Therefore, 
 \[n_2^2 {\rm P} \left(|y_{11}|>\eta\sqrt{n_2}\right) \rightarrow 0,\]
  Hence,  there exist a sequence $\eta_n \rightarrow 0$ such that 
  \be
  n_2^2 {\rm P} \left(|y_{11}|>\eta_n\sqrt{n_2}\right) \rightarrow 0,
  \label{n2p}
  \ee
 by Lemma~15 proved in \cite{LiBaiHu2016}.
  
  Let 
  \[\hat y_{ij}= y_{ij} {\rm I}(y_{ij} < \eta_n \sqrt{n_2}) \quad  \text{and}  \quad   \tilde y_{ij}=(\hat y_{ij} - \rE \hat y_{ij})/ \sigma_{n_2}\]
    with $\sigma_{n_2}^2=$ $\rE \left|\hat y_{ij}- \rE  \hat y_{ij} \right|^2$.
    Similar to the proofs of Section~A.1 in Jiang and Bai (2018), it can be illustrated that   the equivalence of replacement of
   the entries of $y_{ij}$ by  the truncated and centralized variables $\tilde y_{ij}$ under the condition  (\ref{n2p}). In addition, 
the convergence rates of arbitrary moments of  $\ \tilde y_{ij}$ are also the same as the one depicted in Lemma~A.1 in \cite{JiangBai2018}.
We can similarly truncate and normalize the entries of $\bX$ without alerting the limiting properties of eigenvalues of $\bF$. 
Therefore, it is reasonable to consider the generalized Fisher-matrix $\bF=S_1S_2^{-1}$ generated from the entries truncated at $\eta_n \sqrt{n_1}$ for $x_{ij}$ and   $\eta_n \sqrt{n_2}$ for $y_{ij}$, centralized and renormalized. For simplicity, 
we assume that $|x_{ij}| <\eta_n \sqrt{n_1}, |y_{ij}| <\eta_n \sqrt{n_2}$, $\rE x_{ij}=\rE y_{ij}=0, \rE |x_{ij}|^2=\rE |y_{ij}|^2=1$ for the real case and 
  Assumption~$\bB$ is satisfied. 
 But for the complex case, the truncation and renormalization cannot reserve the requirement of
$\rE x_{ij}^2=\rE y_{ij}^2 =0$. However, one may prove that  $\rE x_{ij}^2 =o(n_1^{-1})$ and $\rE y_{ij}^2 =o(n_2^{-1})$.

\subsection{Generalized  Four Moment Theorem for Generalized Fisher Matrix  and Its Applications}  \label{Sec3.2}

To facilitate the reading and understanding, the G4MT is introduced in the process of its application to the CLT for the spiked eigenvalues of a generalized  Fisher matrix.
The proof of G4MT will be postponed to  
the Supplement  for the consistency of reading. 

As mentioned in Proposition~\ref{PT}, a packet of $m_k$ consecutive sample eigenvalues $\{l_{p,j}(\bF), j \in J_k\}$ converge to a limit $\rho_k$ laying outside the supporting of the limiting spectral distribution (LSD), $F^{c_1,c_2}$, of $\bF$.
Recall the CLT for the $m_k$-dimensional vector 
\[\Big(\sqrt{p} \big(l_{p,j}(\bF)- \phi(\alpha_k) \big), j \in J_k\Big)\]
given in  \cite{WangYao2017},  
where $ \phi(\alpha_k)=\displaystyle\frac{\alpha_k(\alpha_k+c_1-1)}{\alpha_k-c_2\alpha_k-1}$,  being  a special case of $\psi(\alpha_k)$ in (\ref{psik})  under their 
 assumption of diagonal block independence.
 In the present work, we consider a more general case that  
 the  matrix $T_p^*T_p$ has an arbitrary form as a symmetric nonnegative definite matrix without diagonal block independence, and both of the spiked eigenvalues and the population 4th moments may be allowed to tend to infinity.  Then,  the renormalized random vector   
 \be
(\gamma_{kj}, {j \in J_k}):=~\left(\sqrt{p} \Big(\frac{l_{p,j}(\bF)}{\psi_n(\alpha_k)}-1\Big) , j \in J_k \right),\label{mrv}
\ee
is considered,
where $\psi_n(\alpha_k)$  is used instead of $\psi(\alpha_k)$ because 
 the difference between $l_{p,j}(\bF)$ and $\psi(\alpha_k)$ may  converge very slowly as mentioned in Remark~\ref{RP1}. 

Furthermore,  the CLT for the  renormalized random vector   $(\gamma_{kj}, {j \in J_k})$ is going to be introduced first
 in the following Theorem~\ref{CLT},  which can be seen as an application of G4MT
 for generalized spiked Fisher matrix. The G4MT for generalized spiked Fisher matrix is presented in the process of the proof of Theorem~\ref{CLT}.
 Since the G4MT for generalized spiked Fisher matrix shows  the universality  for the bulks  of spiked eigenvalues of the generalized Fisher matrices, 
the CLT is suitable for a wider usage, including the release of  the 4th-moment constrain and diagonal blocks  assumption of  $T_p^*T_p$. It makes sense in practice that  the spiked eigenvalues are not necessarily required to be independent of the non-spiked ones by eliminating  diagonal block assumption. 

\begin{theorem}
 Suppose that  Assumptions $\bA \sim {\bf E}$ hold.
 For each distant generalized spiked eigenvalue $\alpha_k$\footnote{distant spiked eigenvalue is defined by $\psi'(\alpha)>0$, see Bai and Yao (2012).} with multiplicity $m_k$, the $m_k$- dimensional real vector 
\[\gamma_k=(\gamma_{kj}):=\left(\sqrt{p}\Big(\frac{l_{p,j}(\bF)}{\psi_{n,k}}-1\Big) , j \in J_k \right)\]
converges  weakly to the joint  distribution  of the $m_k$ eigenvalues of Gaussian random matrix 
\[-\frac1{\kappa_s}\left[\Omega_{\psi_{k}}\right]_{kk}\]
where $\psi_{k}:= \psi(\alpha_k)$,  $\psi_{n,k}:= \psi_n(\alpha_k)$ in (\ref{phink}), and
\be
\kappa_s =1+c_{2}\psi^2_{k}m_2(\psi_{k}) +2 c_2\psi_{k}m(\psi_{k})+\alpha_k\psi_{k}\um_2(\psi_{k})+\alpha_k \um(\psi_{k}),\label{ks}
\ee
 $m,\um, m_2, \um_2$ are defined in (\ref{um2}). 
Here  $\psi_k^2m_2(\psi_k)$ is the limit of $\psi_{n,k}^2m_2(\psi_{n,k})$ even if $\alpha_k\to\infty$.
Furthermore, $\Omega_{\psi_{k}}$ is defined in Corollary~{\ref{coro1}}  and 
$\left[\Omega_{\psi_{k}}\right]_{kk}$ is  the $k$th diagonal block of  $\Omega_{\psi_{k}}$   corresponding to the indices $\{i,j \in J_k\}$. 
\label{CLT}
\end{theorem}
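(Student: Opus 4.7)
The plan is to combine the universality supplied by the G4MT (whose statement and proof are deferred to the Supplement) with a perturbation expansion of the characteristic equation of $\bF$. By the truncation and centralization of Section~\ref{Sec3.1} the entries may be assumed bounded and normalised, so the G4MT asserts that the joint weak limit of $(\gamma_{kj},\,j\in J_k)$ depends on the samples only through Assumptions~$\bA$--${\bf E}$. Hence it suffices to establish the limit under one convenient choice of distribution, for which I would take independent Gaussian $\bX,\bY$ and then transfer the result back via the G4MT.

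The next step is to reduce the $p\times p$ spectral problem to an $M\times M$ one. Using the singular value decomposition (\ref{UDU}) and rearranging $\det(l\bI-\bF)=0$, a standard block manipulation isolates the spiked directions (columns of $U_1, V_1$) and, after inverting the block associated with the bulk spectrum of $T_p^{*}T_p$, produces a determinantal equation of the form
\[
\det\bigl(\bI_M-D_1\bQ_n(l)\bigr)=0,
\]
where $\bQ_n(l)$ is an $M\times M$ random matrix built from bilinear forms in the entries of $\bX,\bY$ and the resolvent of the bulk part of $\bF$. For $l$ near $\rho_k=\psi(\alpha_k)$ the relevant roots come from the $k$th diagonal block; the defining relation (\ref{phink}) ensures that the deterministic approximation of $\bigl(D_1\bQ_n(l)\bigr)_{kk}$ equals $\bI_{m_k}$ at $l=\psi_{n,k}$.

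Substituting $l=\psi_{n,k}\bigl(1+\gamma/\sqrt{p}\bigr)$, a short computation from (\ref{phink}) shows that the derivative of the deterministic part of $\bigl(D_1\bQ_n(l)\bigr)_{kk}$ at $l=\psi_k$ equals $\kappa_s/\psi_k$ times the identity, where $\kappa_s$ is the constant (\ref{ks}). For the fluctuation part, Corollary~\ref{coro1} supplies a CLT for $\sqrt{p}\bigl(\bQ_n(\psi_k)-\rE\bQ_n(\psi_k)\bigr)$ whose $k$th diagonal block converges jointly to $[\Omega_{\psi_k}]_{kk}$. Combining the two expansions, the $m_k$ renormalised roots $\gamma_{kj}$ satisfy
\[
\det\bigl(\kappa_s\gamma\,\bI_{m_k}+[\Omega_{\psi_k}]_{kk}+o_P(1)\bigr)=0,
\]
and continuity of the roots of a matrix-valued polynomial then yields the claimed joint law of the eigenvalues of $-[\Omega_{\psi_k}]_{kk}/\kappa_s$.

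The main obstacle I anticipate lies in the third step: proving the CLT for $\bQ_n(\psi_k)$ in the full generality demanded here, namely without the diagonal block independence of \cite{WangYao2017} and while allowing both the spike $\alpha_k$ and the fourth moments to be unbounded. Controlling the interaction between the spiked singular vectors $U_1,V_1$ and the bulk resolvents of $\tilde S_1,\tilde S_2^{-1}$ is precisely where the Lindeberg-type conditions (\ref{CondU1})--(\ref{CondU2}) of Assumption~${\bf D}$ enter, and where the coefficients $\beta_x,\beta_y$ end up being absorbed into the covariance structure of $[\Omega_{\psi_k}]_{kk}$; verifying that the remainder from the Taylor expansion is uniformly $o_P(p^{-1/2})$ on a neighbourhood of $\psi_k$ will require a quantitative version of Proposition~\ref{PT}.
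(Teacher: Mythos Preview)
Your proposal is correct and follows essentially the same route as the paper: block--SVD reduction of $\det(l\bI-\bF)=0$ to an $M\times M$ determinant, Taylor expansion at $l=\psi_{n,k}$ producing the derivative factor $\kappa_s$, and then the G4MT together with Corollary~\ref{coro1} to identify the fluctuation limit. The only organisational difference is that the paper carries out the algebraic reduction first (for arbitrary $\bX,\bY$), defines the explicit $M\times M$ matrix $\Omega_M(\lambda,\bX,\bY)$ in (\ref{OmegaM}), and states the G4MT (Theorem~\ref{thm2}) as universality of \emph{that} object rather than of the $\gamma_{kj}$ directly---so the reduction precedes rather than follows the appeal to universality, and the reduced equation (\ref{eieqn}) has a more intricate two-sample structure (separate $\bY$-quadratic, $\bX$-quadratic and $\bX$--$\bY$ cross terms) than your schematic $\det(\bI_M-D_1\bQ_n(l))=0$ suggests.
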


 \begin{proof}
First, for the generalized spiked Fisher matrix
$\bF=T_p^{*}\tilde {S_1}T_p\tilde {S_2}^{-1}$,
where  $T_p=\Sigma_1^{1 \over 2}\Sigma_2^{-{1 \over 2}}$, and 
$\tilde {S_1}=1/n_1\bX\bX^*, \tilde {S_2}={1}/{n_2}\bY\bY^*$  are the standardized sample covariance matrices, respectively.
 By singular value decomposition, we have 
 \be
 T_p= U
\left(
\begin{array}{cc}
 D_1^{1 \over 2} & \bo    \\
 \bo & D_2^{1 \over 2}     
\end{array}
\right)V^*
\label{SVD}
\ee
where $V, U$ are orthogonal matrices,   $D_1$ is a diagonal matrix of the $M$ spiked eigenvalues
 and $D_2$ is the diagonal matrix of the non-spiked eigenvalues.
Consider the arbitrary sample spiked eigenvalue of $\bF$, $l_{p,j}, j \in J_k$, 
by the eigenequation with $\bF=T_p^{*}\tilde {S_1}T_p\tilde {S_2}^{-1}$, we have
\[0=|l_{p,j}\bI- \bF|=\left|l_{p,j}\bI-V\left(
\begin{array}{cc}
 D^{1 \over 2}_1 & \bo    \\
 \bo & D^{1 \over 2}_2     
\end{array}
\right)
U^* \tilde {S_1}U
\left(
\begin{array}{cc}
 D^{1 \over 2}_1 & \bo    \\
 \bo & D^{1 \over 2}_2     
\end{array}
\right)V^*\tilde {S_2}^{-1}\right|,\]
which is equivalent to 
\begin{eqnarray*}
0&=&|l_{p,j} V^*\tilde {S_2}V- {\rm diag}(D_1^{1 \over 2},D_2^{1 \over 2})U^* \tilde {S_1} U{\rm diag}(D_1^{1 \over 2},D_2^{1 \over 2})|\\
&=&\Bigg|\begin{pmatrix} l_{p,j} V_1^*\tilde {S_2} V_1&l_{p,j} V_1^* \tilde {S_2} V_2\\  l_{p,j} V_2^*\tilde {S_2} V_1&l_{p,j} V_2^* \tilde {S_2} V_2\end{pmatrix}
-\begin{pmatrix}D_1^{1 \over 2}U_1^* \tilde {S_1}U_1D_1^{1 \over 2}&D_1^{1 \over 2}U_1^* \tilde {S_1}U_2D_2^{1 \over 2}\\ D_2^{1 \over 2}U_2^* \tilde {S_1} U_1 D_1^{1 \over 2}&D_2^{1 \over 2}U_2^* \tilde {S_1}U_2D_2^{1 \over 2}\end{pmatrix}\Bigg|,
\end{eqnarray*}

Since $l_{p,j}$ is  an sample eigenvalue of $\bF$ but not the one of $D_2^{1 \over 2}U_2^* \tilde {S_1}U_2D_2^{1 \over 2}\!(V_2^* \tilde {S_2} V_2\!)\!^{-1}$, then the following equation holds
\begin{align*}
0&=\Big|l_{p,j} V_1^*\tilde {S_2} V_1-D_1^{1 \over 2}U_1^* \tilde {S_1}U_1D_1^{1 \over 2}-(l_{p,j} V_1^* \tilde {S_2} V_2-D_1^{1 \over 2}U_1^* \tilde {S_1}U_2D_2^{1 \over 2})\times \\
&\! \times \! (l_{p,j} V_2^* \tilde {S_2} V_2-D_2^{1 \over 2}U_2^* \tilde {S_1}U_2D_2^{1 \over 2})^{-1}(l_{p,j} V_2^*\tilde {S_2} V_1-D_2^{1 \over 2}U_2^* \tilde {S_1} U_1 D_1^{1 \over 2})\Big|\\
&=\Big|l_{p,j} V_1^*\tilde {S_2} V_1-D_1^{1 \over 2}U_1^* \tilde {S_1}U_1D_1^{1 \over 2}-(l_{p,j} V_1^* \tilde {S_2} V_2-D_1^{1 \over 2}U_1^* \tilde {S_1}U_2D_2^{1 \over 2})Q^{-{1 \over 2}}\times \\
&\! \times \!(l_{p,j} \bI_{p-M}\!-\!Q^{-{1 \over 2}}D_2^{1 \over 2}\!U_2^* \tilde {S_1}U_2D_2^{1 \over 2}Q^{-{1 \over 2}})^{-1}Q^{-{1 \over 2}}(l_{p,j} V_2^*\tilde {S_2} V_1\!-\!D_2^{1 \over 2}U_2^* \tilde {S_1} U_1 D_1^{1 \over 2})\Big|
\end{align*}
where $Q=V_2^* \tilde {S_2} V_2$. By in-out-exchanging formula, 
\[Z(Z'Z-\lambda \bI)^{-1}Z'=\bI+\lambda (ZZ'-\lambda \bI)^{-1},\]
we have 
\begin{align*}
&D_1^{1 \over 2}U_1^* \tilde {S_1}U_1D_1^{1 \over 2}\!+\!D_1^{1 \over 2}U_1^* \tilde {S_1}U_2D_2^{1 \over 2}Q^{-{1 \over 2}}
(l_{p,j} \bI_{p-M}\!-\!Q^{-{1 \over 2}}\!D_2^{1 \over 2}U_2^* \tilde {S_1}U_2D_2^{1 \over 2}Q^{-{1 \over 2}})^{-1}\\
&\!\!\times\!\! Q\!^{-\!{1 \over 2}}\!D_2^{1 \over 2}U_2^* \tilde {S_1} U_1 D_1^{1 \over 2}
\!=\!\frac{l_{p,j}}{n_1} D_1^{1 \over 2}U_1^*\bX(l_{p,j}\bI_{n_1}\!-\!\frac1{n_1}\bX^*U_2 D_2^{1 \over 2}Q^{-1}\!D_2^{1 \over 2} U_2^* \bX)\!^{-1}\!\bX\!^*U_1D_1\!^{1 \over 2}
\end{align*}
Then,  define
$$
 \tilde \bF=\displaystyle\frac 1 {n_1}Q^{-{1 \over 2}}D_2^{1 \over 2}U_2^* \bX\bX^*U_2D_2^{1 \over 2}Q^{-{1 \over 2}}, \quad \underline{\tilde \bF}=\displaystyle\frac 1 {n_1}\bX^*U_2D_2^{1 \over 2}  Q^{-1}D_2^{1 \over 2} U_2^* \bX,\label{FFdef}
 $$
the equation above is equivalent to
 \begin{align}
0=\bigg|&
\frac{l_{p,j}}{n_2}V_1^*\bY  \bY^*V_1\non
&-\frac{l_{p,j}^2}{n_2^2}V_1^*\bY\bY^*V_2Q^{-{1 \over 2}} (l_{p,j} \bI_{p-M}- \tilde \bF\big)^{-1} Q^{-{1 \over 2}}V_2^*\bY \bY^*V_1\non
&-\frac{l_{p,j}}{n_1} D_1^{1 \over 2}U_1^*\bX\big(l_{p,j}\bI_{n_1}-\underline{\tilde \bF}\big)^{-1}\bX^*U_1D_1^{1 \over 2}\non
&+\frac{l_{p,j}}{n_2}V_1^*\bY \bY^*V_2 Q^{-{1 \over 2}}\big(l_{p,j} \bI_{p-M}- \tilde \bF\big)^{-1} Q^{-{1 \over 2}}
\frac{1}{n_1}D_2^{1 \over 2}U_2^*\bX\bX^* U_1 D_1^{1 \over 2}\non
&+\frac{l_{p,j}}{n_1}D_1^{1 \over 2}U_1^* \bX\bX^*U_2D_2^{1 \over 2}Q^{-{1 \over 2}} (l_{p,j} \bI_{p-M}- \tilde \bF)^{-1}Q^{-{1 \over 2}}\frac{1}{n_2} V_2^*\bY\bY^* V_1\bigg|
\label{eieqn}
\end{align}

 Furthermore, define
 and  
 $m (\lambda), \um(\lambda), m_2(\lambda), \um_2(\lambda), m_3(\lambda)$  as below:
   \bqa
  &m(\lambda)=\displaystyle\int \frac{1}{x-\lambda} \md \tilde {F}(x),  \quad &  \underline{m}(\lambda)=\displaystyle\int \frac{1}{x-\lambda} \md  \underline{\tilde F}(x) ;\non
 & m_2(\lambda)=\displaystyle\int \frac{1}{(\lambda-x)^2} \md \tilde {F}(x),   \quad & 
    \underline{m}_2(\lambda)=\displaystyle\int \frac{1}{(\lambda-x)^2} \md \underline{\tilde F}(x) ;\non
 &m_3(\lambda)=\displaystyle\int \frac{x}{(\lambda-x)^2} \md \tilde {F}(x),  \quad & \label{um2}
  \eqa
where $\tilde F(x)$   and $\underline{ \tilde F}(x)$  are the LSDs of  the matrices $\tilde \bF$ and $\underline{ \tilde \bF}$, 
respectively.
Since $(U_1^*X, V_1^*Y)$ is independent of $(U_2^*X, V_2^*Y)$, and the covariance matrix between $U_1^*X$ and  $V_1^*Y$ is a zero matrix $\bo_{M\times M}$. Then 
 \begin{align}
&\frac{l_{p,j}}{n_2}V_1^*\bY \bY^*V_2 Q^{-{1 \over 2}}\big(l_{p,j} \bI_{p-M}- \tilde \bF\big)^{-1} Q^{-{1 \over 2}}
\frac{1}{n_1}D_2^{1 \over 2}U_2^*\bX\bX^* U_1 D_1^{1 \over 2} \rightarrow \bo_{M\times M}\label{eq01}\\
&\frac{l_{p,j}}{n_1}D_1^{1 \over 2}U_1^* \bX\bX^*U_2D_2^{1 \over 2}Q^{-{1 \over 2}} (l_{p,j} \bI_{p-M}- \tilde \bF)^{-1}Q^{-{1 \over 2}}\frac{1}{n_2} V_2^*\bY\bY^* V_1
\rightarrow \bo_{M\times M}\label{eq02}
\end{align}
According to   Lemma~2.7 in \cite{BaiSilverstein1998}, we have  
 \begin{align}
&\frac{l_{p,j}}{n_2}V_1^*\bY  \bY^*V_1 \rightarrow 
\psi_k \bI_M\\
-&\frac{l_{p,j}^2}{n_2^2}V_1^*\bY\bY^*V_2Q^{-{1 \over 2}} (l_{p,j} \bI_{p-M}- \tilde \bF\big)^{-1} Q^{-{1 \over 2}}V_2^*\bY \bY^*V_1 \rightarrow c_2\psi_k^2m(\psi_k)\bI_M\\
-&\frac{l_{p,j}}{n_1} D_1^{1 \over 2}U_1^*\bX\big(l_{p,j}\bI_{n_1}-\underline{\tilde \bF}\big)^{-1}\bX^*U_1D_1^{1 \over 2} \rightarrow \psi_k\um(\psi_k) D_1
\label{eq05}
\end{align}
Therefore, combine the equations (\ref{eieqn}), (\ref{eq01}) - (\ref{eq05}), we  obtain that $\psi_k$ satisfies the following equation
\begin{equation}
\psi_{k}+c_{2}\psi^2_{k}m(\psi_{k})+\psi_k\um(\psi_{k}) \alpha_k=0.\label{0eqa}
\end{equation}

Define 
\begin{align}
 \Omega_{M}(\lambda,\bX,\bY)&= \Omega_{M,1}(\lambda,\bX,\bY)+
\Omega_{M,2}(\lambda,\bX,\bY)+\Omega_{M,3}(\lambda,\bX,\bY)\non
 &\quad+\Omega_{M,4}(\lambda,\bX,\bY)+\Omega_{M,5}(\lambda,\bX,\bY),\label{OmegaM}
\end{align}
where
\begin{align}
 \Omega_{M,1}(\lambda,\bX,\bY)&=\sqrt{p} V_1^*( {\tilde S}_{2}-\bI_p)V_1\label{OmegaM}\\
  \Omega_{M,2}(\lambda,\bX,\bY)&=\frac{\sqrt{p}\lambda}{n_2}{\rm tr}\! (\lambda \bI_{p-M}-\tilde \bF)^{-1} \bI_M\non
 &-\frac{\sqrt{p}\lambda}{n_2^{2}}
 V_1^*\bY\bY^*V_2Q^{-\frac1{2}} (\lambda \bI_{p-M}-\tilde \bF)^{-1} Q^{-\frac1{2}}V_2^*\bY\bY^*V_1\non
\Omega_{M,3}(\lambda,\bX,\bY)&=\frac{\sqrt{p}}{n_1}{\rm tr}\! (\lambda \bI_{n_1}\!\!-\! \underline{\tilde \bF})^{-1}  D_1 \!-\!\frac{\sqrt{p}}{n_1}D_1^{1 \over 2}U_1^*\bX\big(\lambda\bI_{n_1}\!\!-\! \underline{\tilde \bF}\big)^{-1}\!\bX^*U_1D_1^{1 \over 2}\non
\Omega_{M,4}(\lambda,\bX,\bY)&= \frac{\sqrt{p}}{n_1n_2}V_1^*\bY\bY^*V_2 Q^{-{1 \over 2}}\big(\lambda \bI_{p-M}\!-\!\tilde \bF\big)^{-1} Q^{-{1 \over 2}}
D_2^{1 \over 2}U_2^*\bX\bX^* U_1 D_1^{1 \over 2}\non
 \Omega_{M,5}(\lambda,\bX,\bY)&= \frac{\sqrt{p}}{n_1n_2}D_1^{1 \over 2}U_1^* \bX\bX^*U_2D_2^{1 \over 2} Q^{-{1 \over 2}}\big(\lambda \bI_{p-M}-\tilde \bF\big)^{-1} Q^{-{1 \over 2}}
V_2^*\bY\bY^* V_1.\nonumber
\end{align}
For every sample spiked eigenvalue, $l_{p,j}, j \in J_i, i=1,\cdots, K$ and non-zero population spiked eigenvalues, it follows from equation (\ref{eieqn}) that
\begin{align}
0 &=\bigg|l_{p,j}\bI_M\!\!-\!\!\frac{l^2_{p,j}}{n_2} {\rm tr}(l_{p,j} \bI_{p\!-\!M}\!-\! \tilde \bF)^{-1}\! \bI_{M}
\!\!-\!\!\frac{l_{p,j}}{n_1}{\rm tr}(l_{p,j} \bI_{n_1}\!\!-\! \underline{\tilde \bF})^{-1} \! D_1
\!\!+\!\!\frac{l_{p,j}}{\sqrt{p}}\Omega_{M}(l_{p,j},\!\bX,\bY)\bigg| \non
&=\bigg|(\psi_{n,k}+c_{2}\psi^2_{n,k}m(\psi_{n,k}))\bI_M+\psi_{n,k}\um(\psi_{n,k})D_1+\frac 1{\sqrt{p}}\gamma_{kj}\psi_{n,k}\bI_M\non
&\quad+B_1(l_{p,j})+B_2(l_{p,j})+\frac{\psi_{n,k}}{\sqrt{p}}\Omega_{M}(\psi_{n,k},\bX,\bY)+o(\frac 1{\sqrt{p}})\bigg| 
\label{IBJ}
\end{align}
where  the involved $B_i(l_{p,j}), i=1,2$ are specified as following, and
 $\psi_{n,k}$  is used instead of $\psi_k$  to  avoid  the slowly convergence 
as mentioned  in Remark~\ref{RP1}. In details, 
\begin{align}
&\quad B_{1}(l_{p,j})\non
&= \frac{\psi^2_{n,k}}{n_2^{2}}
 V_1^*\bY\bY^*V_2Q^{-\frac1{2}} (\psi_{n,k} \bI_{p-M}-\tilde \bF)^{-1} Q^{-\frac1{2}}V_2^*\bY\bY^*V_1\non
&\quad-\frac{l^2_{p,j}}{n_2^{2}}
 V_1^*\bY\bY^*V_2Q^{-\frac1{2}} (l_{p,j} \bI_{p-M}-\tilde \bF)^{-1} Q^{-\frac1{2}}V_2^*\bY\bY^*V_1\non
 &= \frac{\psi^2_{n,k}}{n_2} V_1^*\bY\!\bigg(\!\frac{1}{n_2}\!\bY^*V_2Q^{-\frac1{2}}\! \Big(\!(\psi_{n,k} \bI_{p-M}\!-\!\tilde \bF)^{-1} 
\! \!-(l_{p,j} \bI_{p-M}\!-\!\tilde \bF)^{-1}\!\Big)\!Q^{-\frac1{2}}\!V_2^*\bY\!\bigg)\!\bY^*V_1\non
& \quad-\frac{l^2_{p,j}-\psi^2_{n,k}}{n_2}  V_1^*\bY\Big(\frac{1}{n_2}\bY^*V_2Q^{-\frac1{2}} (l_{p,j} \bI_{p-M}-\tilde \bF)^{-1} Q^{-\frac1{2}}V_2^*\bY\Big)\bY^*V_1\non
&=\!\frac 1{\sqrt{p}}\!\gamma_{kj}\psi^3_{n,k}\frac1{n_2}  
  \!\tr\left(\!\Big(\psi_{n,k}\bI_{p-M}\!-\! \tilde \bF \Big)^{-1}\!\!\Big((\psi_{n,k}+\frac 1{\sqrt{p}}\gamma_{kj}\psi_{n,k}) \bI_{p-M}\!\!-\!\tilde \bF \Big)^{-1}
\!\right)\!\bI_{M} \non 
&\quad-\!\frac 1{\sqrt{p}}\!\gamma_{kj}2\psi^2_{n,k}\frac1{n_2}  
  \!\tr\Big(l_{p,j}\bI_{p-M}\!-\! \tilde \bF \Big)^{-1}\bI_{M}+o(\frac 1{\sqrt{n_2}}) \non
&=\!\frac 1{\sqrt{p}}\!\gamma_{kj}\Big( c_{2}\psi^3_{n,k}m_2(\psi_{n,k}) +2 c_2\psi^2_{n,k}m(\psi_{n,k})\Big)\bI_{M}+
o(\frac 1{\sqrt{p}}) 
  \label{Bcha1}
\end{align}
and 
\begin{align}
&\quad B_{2}(l_{p,j})\non
&=\frac{\psi_{n,k}}{n_1}D_1^{1 \over 2}U_1^*\bX\big(\psi_{n,k}\bI_{n_1}\!-\! \underline{\tilde \bF}\big)\!^{-1}\!\bX^*U_1D_1^{1 \over 2}\!-\!\frac{l_{p,j}}{n_1}D_1^{1 \over 2}U_1^*\bX\big(l_{p,j}\bI_{n_1}\!-\! \underline{\tilde \bF}\big)^{-1}\bX^*U_1D_1^{1 \over 2}\non
&=\frac{\psi_{n,k}}{n_1}D_1^{1 \over 2}U_1^*\bX\Big((\psi_{n,k} \bI_{n_1}-\underline{\tilde \bF})^{-1} 
 -(l_{p,j} \bI_{n_1}-\underline{\tilde \bF})^{-1}\Big)
\bX^*U_1D_1^{1 \over 2}\non
&\quad-\frac{l_{p,j} -\psi_{n,k}}{n_1}D_1^{1 \over 2}U_1^*\bX(l_{p,j} \bI_{n_1}-\underline{\tilde \bF})^{-1}
\bX^*U_1D_1^{1 \over 2}\non
&=\!\frac 1{\sqrt{p}}\!\gamma_{kj}\psi^2_{n,k}\frac1{n_1}   \tr\big((\psi_{n,k} \bI_{n_1}-\underline{\tilde \bF})^{-2} \big)D_1\non
&\quad
-\frac 1{\sqrt{p}}\!\gamma_{kj}\psi_{n,k}\frac1{n_1}   \tr\big((\psi_{n,k} \bI_{n_1}-\underline{\tilde \bF})^{-1} \big)D_1
+o(\frac 1{\sqrt{n_1}})\non
&=\!\frac 1{\sqrt{p}}\!\gamma_{kj}\Big(\psi^2_{n,k}\um_2(\psi_{n,k})+\psi_{n,k} \um(\psi_{n,k})
\Big)D_1+
o(\frac 1{\sqrt{p}}) 
  \label{Bcha2}
\end{align}

Then, combine all  of the equations (\ref{IBJ}), (\ref{Bcha1}) and (\ref{Bcha2}), for non-zero spiked eigenvalues and $\psi_{n,k}$, it is obvious that 
\begin{align}
0&=\bigg|(\psi_{n,k}+c_{2}\psi^2_{n,k}m(\psi_{n,k}))\bI_M+\psi_{n,k}\um(\psi_{n,k})D_1\non
& +\frac 1{\sqrt{p}}\gamma_{kj}\bigg(
\big( \psi_{n,k}+c_{2}\psi^3_{n,k}m_2(\psi_{n,k}) +2 c_2\psi^2_{n,k}m(\psi_{n,k})\big)\bI_{M}\non
&+\big(\psi^2_{n,k}\um_2(\psi_{n,k})+\psi_{n,k} \um(\psi_{n,k})\big)D_1\bigg)+\frac{\psi_{n,k}}{\sqrt{p}}\Omega_M(\psi_{n,k},\bX,\bY)+o(\frac1{\sqrt{p}}) \bigg|.\nonumber
\label{eigeneq8} 
\end{align}

Moreover, $\psi_{n,k}$ satisfies the equation (\ref{0eqa}), it means that the population spiked eigenvalues $\alpha_u$ in the $u$-th diagonal block of $D_1$ makes
$\psi_{n,k}+c_{2} \psi^2_{n,k}m(\psi_{n,k}) + \psi_{n,k}\um(\psi_{n,k}) \alpha_u$
keep away from 0,if $u\neq k$;
and satisfies 
$\psi_{n,k}+c_{2}\psi^2_{n,k}m(\psi_{n,k})+\psi_{n,k}\um(\psi_{n,k}) \alpha_k=0$.  For non-zero limit of spiked eigenvalue, $\psi_{n,k}$, each $k$-th diagonal block of the above equation is  multiplied $p^{\frac{1}{4}}$  by rows and columns, respectively. By Lemma~4.1 in \cite{BaiMiaoRao1991},
we obtain that 
\begin{align}
&\bigg|\gamma_{kj}
\psi_{n,k}\big( 1+c_{2}\psi^2_{n,k}m_2(\psi_{n,k}) +2 c_2\psi_{n,k}m(\psi_{n,k})+\alpha_k\psi_{n,k}\um_2(\psi_{n,k})+\alpha_k \um(\psi_{n,k})\big)\bI_{m_k}\non
&+\psi_{n,k}\big[\Omega_M(\psi_{n,k},\bX,\bY)\big]_{kk}+o(1) \bigg|=0,
\end{align}
where $\left[~\cdot ~\right]_{kk}$ is  the $k$-th diagonal block of  a matrix  corresponding to the indices $\{i,j \in J_k\}$. 
According to the  Skorokhod strong representation in  \cite{Skorokhod1956, HuBai2014}, 
it follows  that the  convergence 
of $\Omega_M(\psi_{n,k},\bX,\bY)$  and (\ref{eigeneq8}) can be achieved simultaneously in probability 1 by choosing an appropriate probability space.


Let
\be
\kappa_s =1+c_{2}\psi^2_{n,k}m_2(\psi_{n,k}) +2 c_2\psi_{n,k}m(\psi_{n,k})+\alpha_k\psi_{n,k}\um_2(\psi_{n,k})+\alpha_k \um(\psi_{n,k}).\label{kappas}
\ee
Thus, it is obvious that, $\gamma_{kj}$ asymptotically satisfies the following equation   
\begin{equation}
\Big|\gamma_{kj} \cdot \kappa_s  \bI_{m_k}
+\big[\Omega_{\psi_{k}}\big]_{kk}
 \Big|  =0,\label{fineq0}
 \end{equation}
 where $\Omega_{\psi_{k}}$ is an $M\times M$ Hermitian matrix, being  the limiting distribution of 
 $\Omega_M(\psi_{n,k},\bX,\bY)$.
 

Therefore, 
our
remaining major work is to derive the  limiting distribution of   $\Omega_M(\psi_{n,k},\bX,\bY)$.
To this end,
the G4MT for generalized Fisher-matrix is proposed 
in the following theorem, which shows that the limiting distribution of  the spiked  eigenvalues of a generalized spiked Fisher matrix is independent of the actual distributions of the samples provided to satisfy the Assumptions $\bA \sim {\bf E}$. For the consistency of reading,  its proof is  postponed to Supplement. 

\begin{theorem}[{\bf Generalized 
Four Moment Theorem}]\label{thm2}
Assuming that $(\bX, \bY)$ and $(\bW,\bZ)$ are two pairs of double arrays, each of which  satisfies Assumptions $\bA \sim {\bf E}$,  then
 $\Omega_{M}(\lambda,\bX,\bY)$ and $\Omega_{M}(\lambda,\bW,\bZ)$ have the same limiting distribution, provided one of them has. 
\end{theorem}
According to  Theorem \ref{thm2}, we may assume that $\bX$ and $\bY$ are consist of entries with {\rm i.i.d.} standard random variables in deriving the limiting distributions of $\Omega_M(\psi_{n,k},\bX,\bY)$.  
 Thus, we have the following Corollary.
 
\begin{corollary}\label{coro1}
Suppose that  both $\bX$ and $\bY$ satisfy the Assumptions $\bA \sim {\bf E}$, and 
let
\begin{align}
&\theta_k=c_2+ c_2^2\psi_{k}^2 m_2(\psi_{k})+2c_2^2\psi_km(\psi_{k})+ c_1\alpha_k^2\um_2(\psi_{k})+ 2c_1 c_2 \alpha_k m_3(\psi_{k}). \label{theta}
\end{align}
Then, it holds that 
 $\Omega_M(\psi_{n,k}, \bX,\bY)$ tends to a limiting distribution of an $M\times M$ Hermitian matrix {$\Omega_{\psi_{k}}$,  where
 $\frac{1}{\sqrt{\theta_k}}\left[\Omega_{\psi_{k}}\right]_{kk}$ is  Gaussian Orthogonal Ensemble (GOE) for the real case, with 
the entries above the diagonal being ${\rm i.i.d.} \mathcal{N}(0,1)$ and the entries on the diagonal being ${\rm i.i.d.} \mathcal{N}(0,2)$. 
For the complex case, the $\frac{1}{\sqrt{\theta_k}}\left[\Omega_{\psi_{k}}\right]_{kk}$ is  GUE, whose  entries  are all ${\rm i.i.d.} \mathcal{N}(0,1)$.}
\end{corollary}
The proof  of Corollary~{\ref{coro1}} is also detailed in Supplement. 

Therefore,  by the equation (\ref{fineq0}), the $m_k$-dimensional real vector 
$\{\gamma_{kj}, j\in J_k\}$ converges  weakly to the distribution  of  the $m_k$ eigenvalues of the Gaussian random matrix 
\[-\frac1{\kappa_s}\left[\Omega_{\psi_{k}}\right]_{kk}\]
for each distant generalized spiked eigenvalue. The   distribution of $\Omega_{\psi_{k}}$ is  detailed in Corollary~\ref{coro1}.
Then, the CLT For each distant  spiked eigenvalue of a
generalized covariance matrix  is obtained.
\end{proof}

\begin{remark}
Let $\tilde c_i= (p-M)/n_i, i=1,2.$
 Since the convergence of $c_{n_1} \to c_1$, $c_{n_2} \to c_2$   may be very slow, and the differences between  $c_{n_i}$ and $\tilde c_{i}$ may also go to 0 slowly. Therefore, in the aspect of  statistical inference, 
 the $m_k$-dimensional real vector 
\[\gamma_k=(\gamma_{kj}):=\left(\sqrt{p-M}\Big(\frac{l_{p,j}(\bF)}{\psi_{n,k}}-1\Big) , j \in J_k \right)\]
is used instead, and all the conclusions of Theorem~\ref{CLT} still holds, but with 
$c_i$ substituted by $\tilde c_i$, i=1,2, except the ones in $\psi_{n,k}$.
 \label{rmk1}
\end{remark}

Actually, our result  cannot cover some exceptional  cases, in which  the Assumption~{\bf D} is not satisfied. For example, it is the case that $T_p^*T_p$ is a diagonal matrix or a diagonal block matrix.
For  such special cases,   we use $X,Y$ to represent the random variables corresponding to the arrays $\{x_{ij}\}$ and $\{y_{ij}\}$, respectively. Then we require that the 4th moments of $X,Y$  and all the spiked eigenvalues are bounded, and obtain the following conclusion, which plays the same role as the result of  \cite{WangYao2017}  involved with  the bounded 4th moments of $X$ and $Y$ under the assumption of diagonal block independence. 

\begin{remark}
Suppose that  both $\bX$ and $\bY$ satisfy the Assumptions $\bA, {\bf B}, {\bf C}$ and  ${\bf E}$, excluding the Assumption ${\bf D}$, but the 4th moments of $X,Y$  and all the spiked eigenvalues are bounded. Then
 all the conclusions of Theorem~\ref{CLT} still holds, but the limiting distribution of  $\Omega_M(\psi_{n,k}, \bX,\bY)$ turns to 
 an $M\times M$ Hermitian matrix $\Omega_{\psi_{k}}=(\omega_{st})$,  which has the independent Gaussian entries of 
 mean 0 and variance 
 \[{\rm Var}(\omega_{st})=
\left\{
\begin{array}{cc}
2\theta_k+\beta_x\nu_1+\beta_y\nu_2 ,  &   s=t \in J_k   \\
\theta_k,  &      s\neq t \in J_k   
\end{array}
\right.
\]
where  $\theta_k$ is defined in (\ref{theta}),  
$\nu_1=c_1\alpha_k^2/\big(\psi_k(1+c_1m(\psi_k))\big)^2$, $\nu_2=c_2\big(1+c_2\psi_k m(\phi_k))\big)^2$
%
 for the real case.
 For the complex case, the $\frac{1}{\sqrt{\theta_k}}\left[\Omega_{\psi_{k}}\right]_{kk}, k=1,\cdots, K$ are still  GUE, whose  entries  are all ${\rm i.i.d.} \mathcal{N}(0,1)$.
 \label{rmk2}
\end{remark}
This remark is used in the simulations of Case I under non-Gaussian assumptions.


\section{ Simulation Study}\label{Sim}

In this section, simulations are provided to  evaluate our main results comparing to the existing work in  \cite{WangYao2017}.
We consider two scenarios:  
\begin{description}
\item[ Case I:] The matrix $T_pT_p^*$ is assumed as a finite-rank perturbation of a identity matrix $\bI_p$, where $\Sigma_2=\bI_p$ and $\Sigma_1$ is an identity matrix with  the spikes $(20,0.2,0.1)$ of the multiplicity $(1,2,1)$ in the descending order and  thus $K=3$ and $M=4$ as proposed in  \cite{WangYao2017}.  
\item[Case II:]  The matrix $T_pT_p^*$ is a general positive definite matrix, but not necessary  with diagonal blocks independence assumption. 
It is designed as below: 
$\Sigma_2=\bI_p$ and $\Sigma_1=U_0 \Lambda U_0^*$, where  $\Lambda$ is a diagonal matrix  made  up of
the spikes   $(20,0.2,0.1)$ with multiplicity $(1,2,1)$ and the other eigenvalues being 1 in the descending order. 
Let $U_0$  be equal to the matrix composed of eigenvectors
 of the following matrix
\begin{align}
\left(
\begin{array}{ccccc}
  1 & \rho & \rho^2 & \cdots & \rho^{p-1} \\
  \rho & 1 & \rho & \cdots & \rho^{p-2} \\
  \ldots & \ldots &  &  &  \\
  \rho^{p-1} & \rho^{p-2} & \cdots & \rho & 1 \\
\end{array}
\right).
\end{align}
where $\rho=0.5$.
\end{description}

 For every scenario, 
we propose  two population assumptions as following:

\begin{description}
\item[ Gaussian Assumption:]  $x_{ij}$ and $y_{ij}$  are both ${\rm i.i.d.}$ sample from  standard Gaussian population; 
\item[ Binomial  Assumption:]  $x_{ij}$ and $y_{ij}$ are  ${\rm i.i.d.}$ samples from the binary variables valued at $\{-1, 1\}$ with equal probability $1/2$, and $\beta_x=\beta_y=1-3=-2$.

%
%
\end{description} 

Then,
we report the empirical  distribution  with 1000 replications   at  the
 values of  $p=200$ and 
 sample sizes $ n_1=1000$ and $n_2=400$. 
 
 \subsection{Case I under Gaussian Assumption}
 As described in {\rm \bf Case~I}, we have the spikes $\alpha_1=20$, $\alpha_2=0.2$ and $\alpha_3=0.1$, First, we assume that  the Gaussian Assumption hold and  let $l_{1,p},\cdots, l_{p,p} $ be the sorted sample eigenvalues of the $F-$matrix defined in (\ref{F}). Then by the Theorem~\ref{CLT}, we obtain the limiting results as below. 
\begin{itemize}
\item First, take the single  population  spikes $\alpha_1=20$ and $\alpha_3=0.1$ into account, and consider the largest sample eigenvalue $l_{1,p}$ , we have :
\[\gamma_1= \sqrt{p-4}\Big(\frac{l_{p,1}(\bF)}{\psi_{n,1}}-1\Big) \rightarrow N(0, \sigma_1^2)\]
where 
\[\psi_{n,1}=42.667;\quad  \sigma_1^2=2.383 .\]

Similarly, for
the least eigenvalues   $l_{p,p}$, we have 
\[\gamma_3= \sqrt{p-4}\Big(\frac{l_{p,p}(\bF)}{\psi_{n,3}}-1\Big) \rightarrow N(0, \sigma_3^2)\]
where 
\[\psi_{n,3}= 0.0737;\quad  \sigma_3^2=1.343.\] 
\item Second, for  the spikes $\alpha_2=0.2$ with multiplicity 2, we consider the sample eigenvalue $l_{1,p-1}$ and
 $l_{1,p-2}$, we obtain that the two-dimensional random vector 
\[\gamma_2=\left(\gamma_{2,1}, \gamma_{2,2}\right)'=\left( \sqrt{p-4}\Big(\frac{l_{p,p-2}(\bF)}{\psi_{n,2}}-1\Big),
\sqrt{p-4}\Big(\frac{l_{p,p-1}(\bF)}{\psi_{n,2}}-1\Big)\right)' \]
converges to the eigenvalues of random matrix $-\frac1{\kappa_s}\left[\Omega_{\psi_{2}}\right]_{22}$,
where  $\psi_{n,2}=0.133$, $\kappa_s=1.441$ for the spike $\alpha_2=0.2$.  Furthermore, the
matrix 
$\left[\Omega_{\psi_{2}}\right]_{22}$
 is a $2\times 2$ symmetric matrix with the independent Gaussian entries, of which the $(s,t)$ element has mean zero and the variance given by 
 \[var(w_{st})=
\left\{
\begin{array}{cc}
 2.326, &~ \text{if}~ s =t  \\
 1.163,  &  ~ \text{if}~ s \neq t  
\end{array}
\right.
\]
 \end{itemize}
The simulated empirical distributions of the spiked eigenvalues from Normal assumption under {\rm \bf Case~I}  are drawn in Figure~1 in contrast to their corresponding limiting distributions.

 \begin{figure}[htbp]
\begin{center}
\includegraphics[width = .37\textwidth]{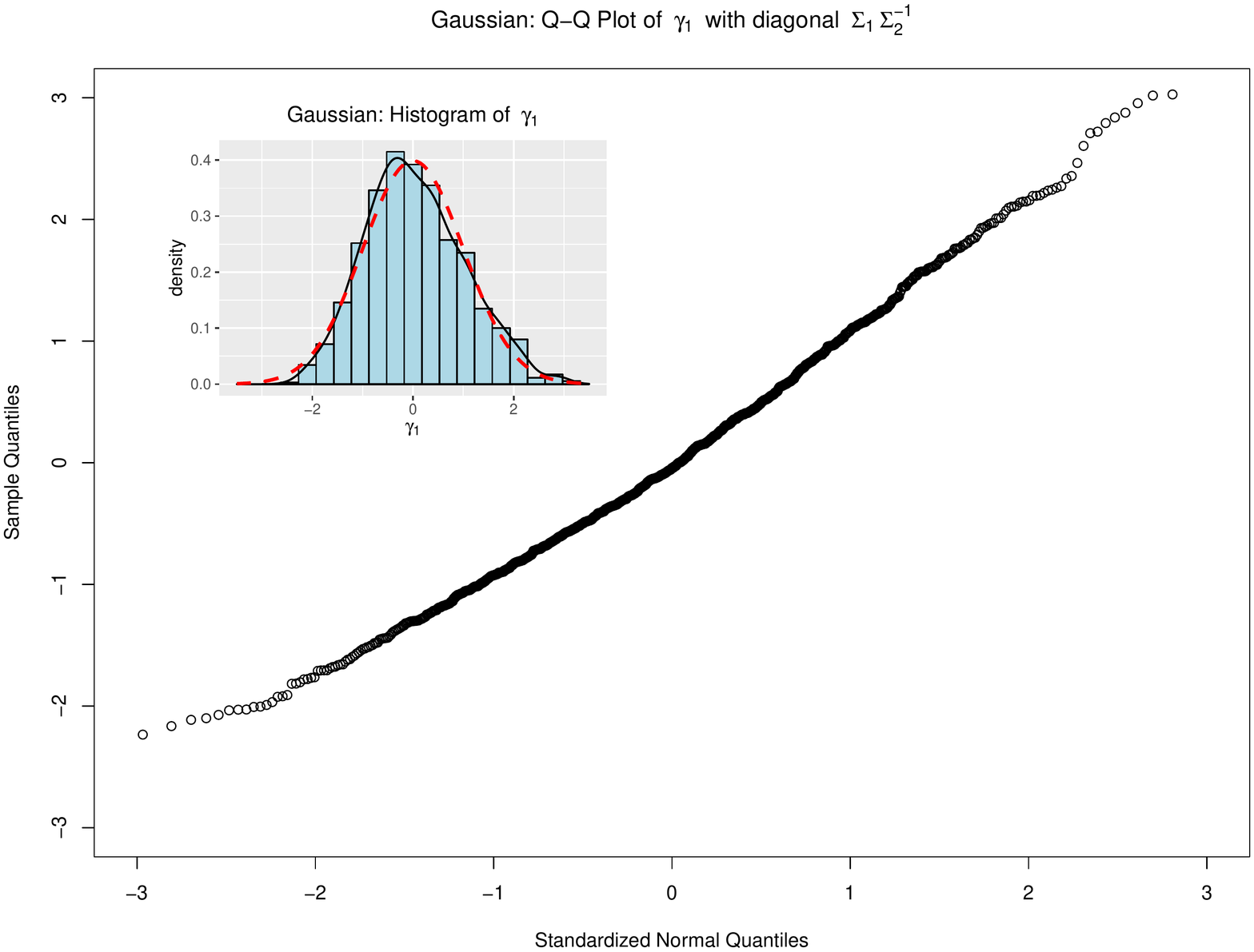}\quad\quad 
\includegraphics[width = .37\textwidth]{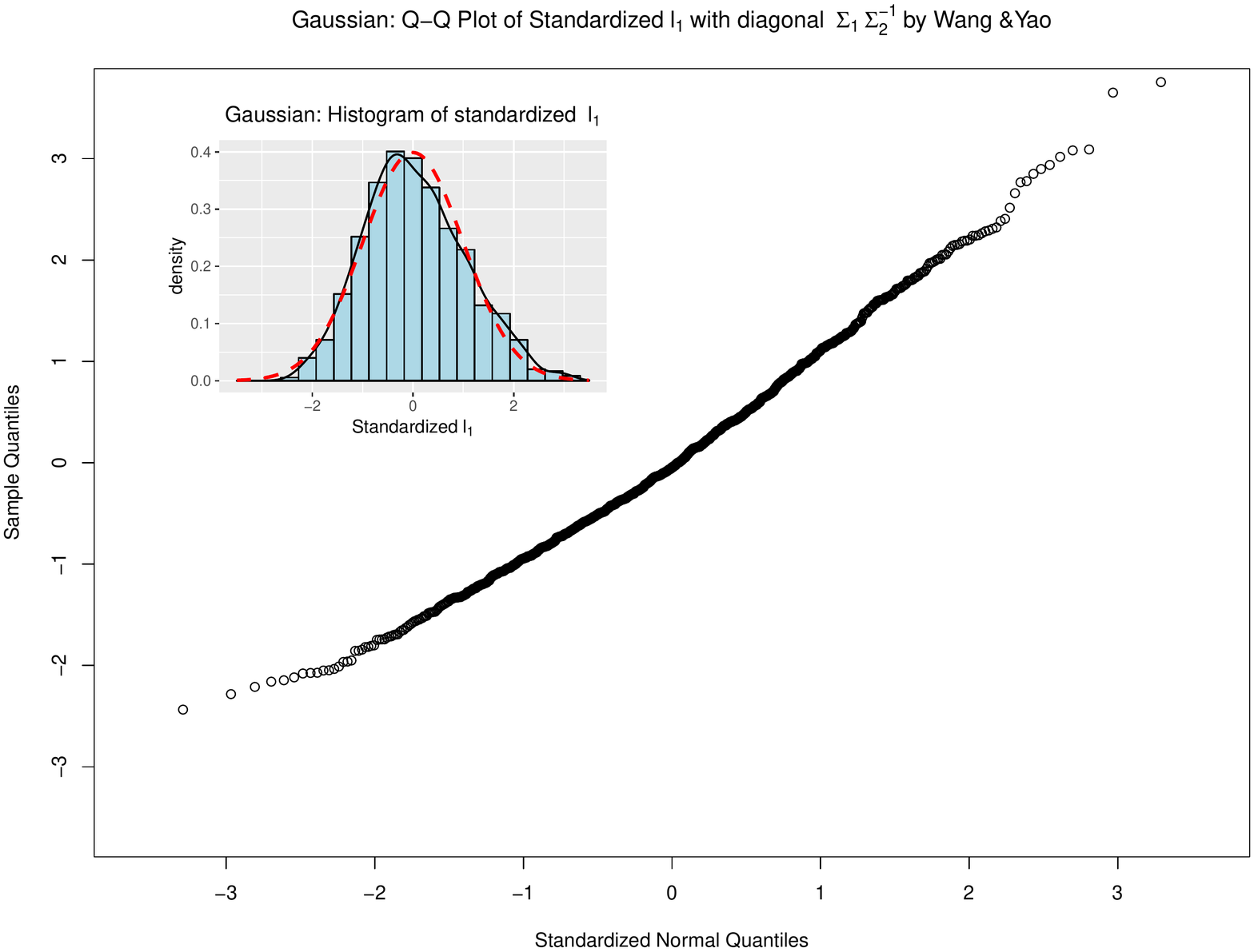}\\
\includegraphics[width = .37\textwidth]{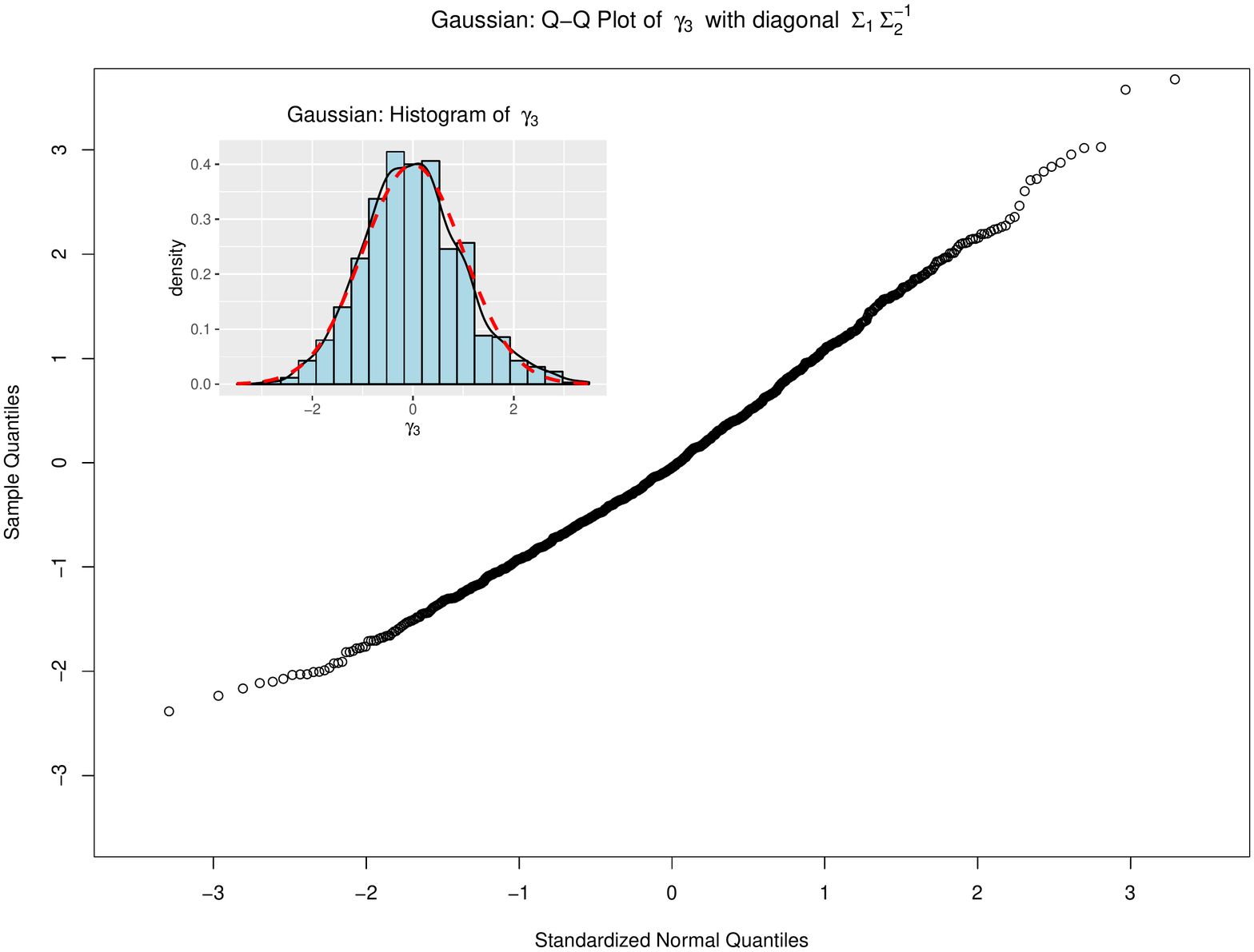}\quad\quad 
\includegraphics[width = .37\textwidth]{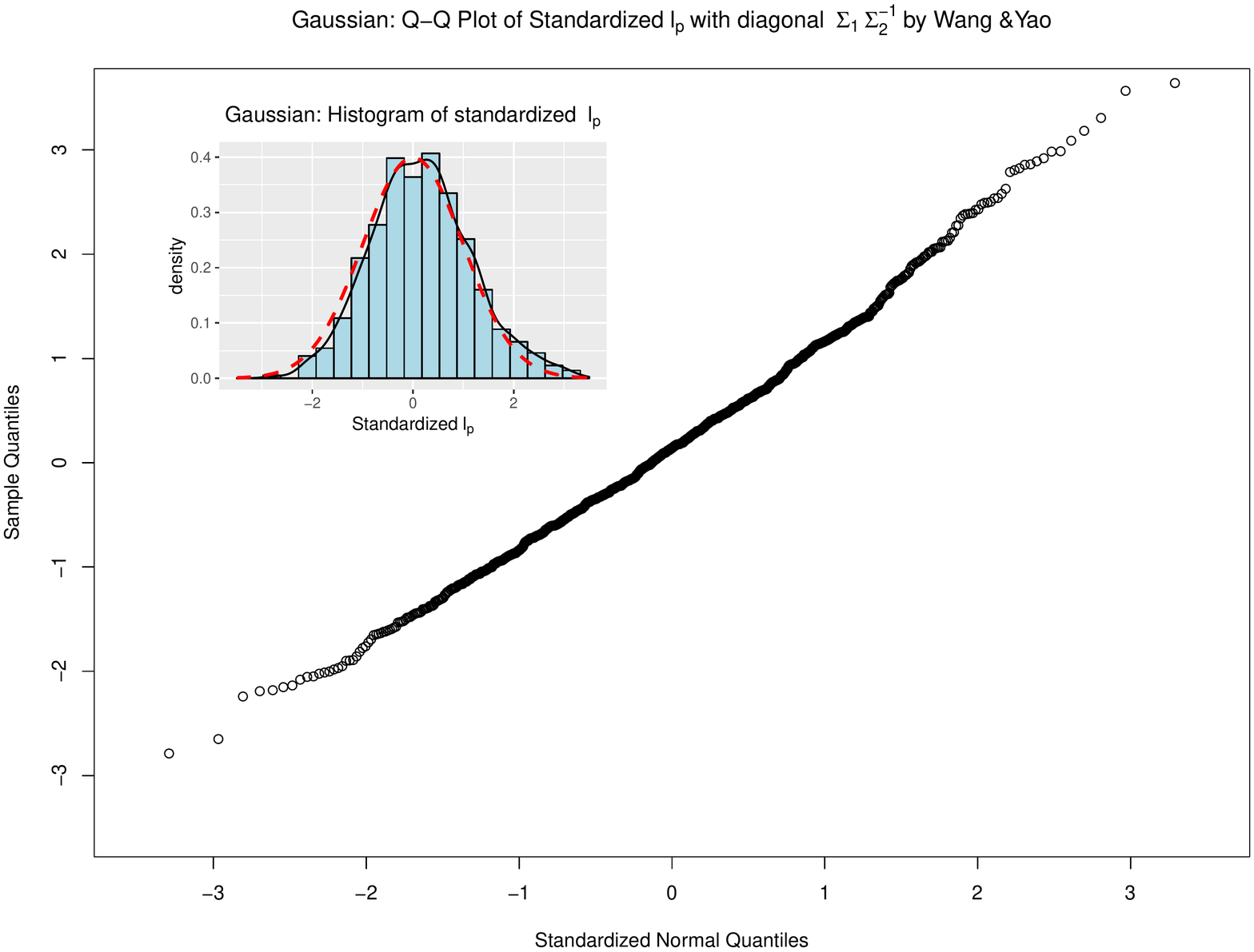}\\
\includegraphics[width = .3\textwidth]{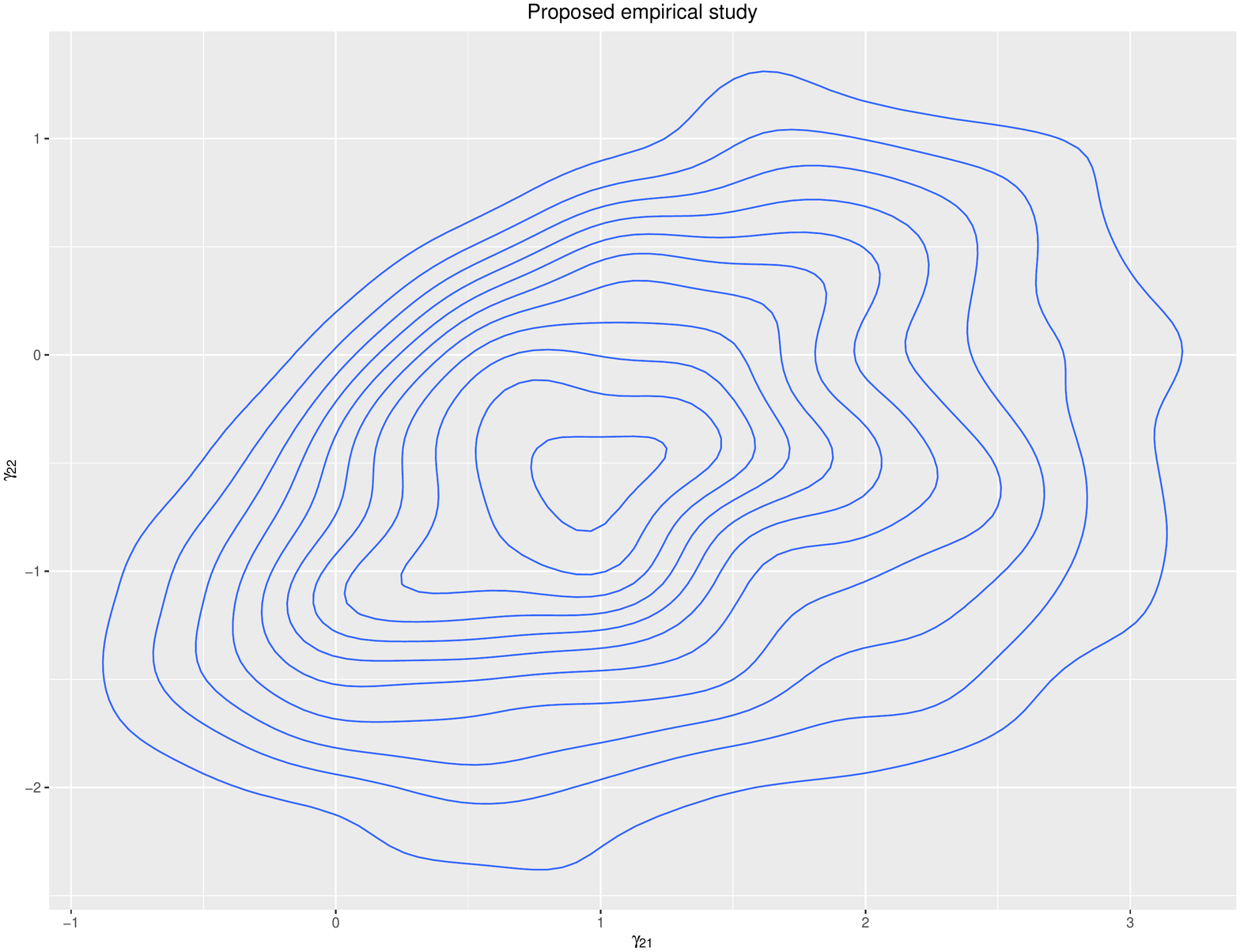}\quad \includegraphics[width = .3\textwidth] {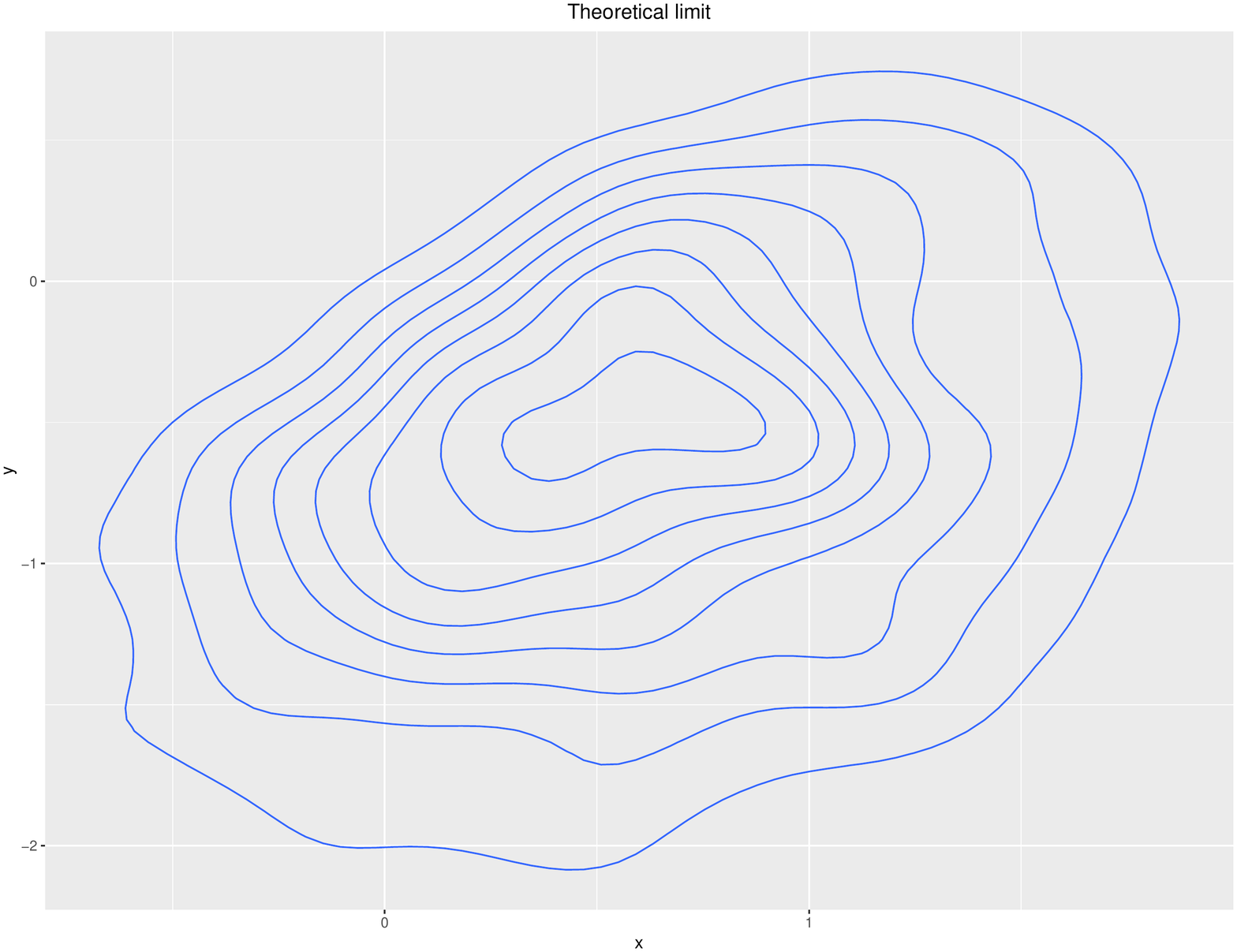}\quad \includegraphics[width = .3\textwidth] {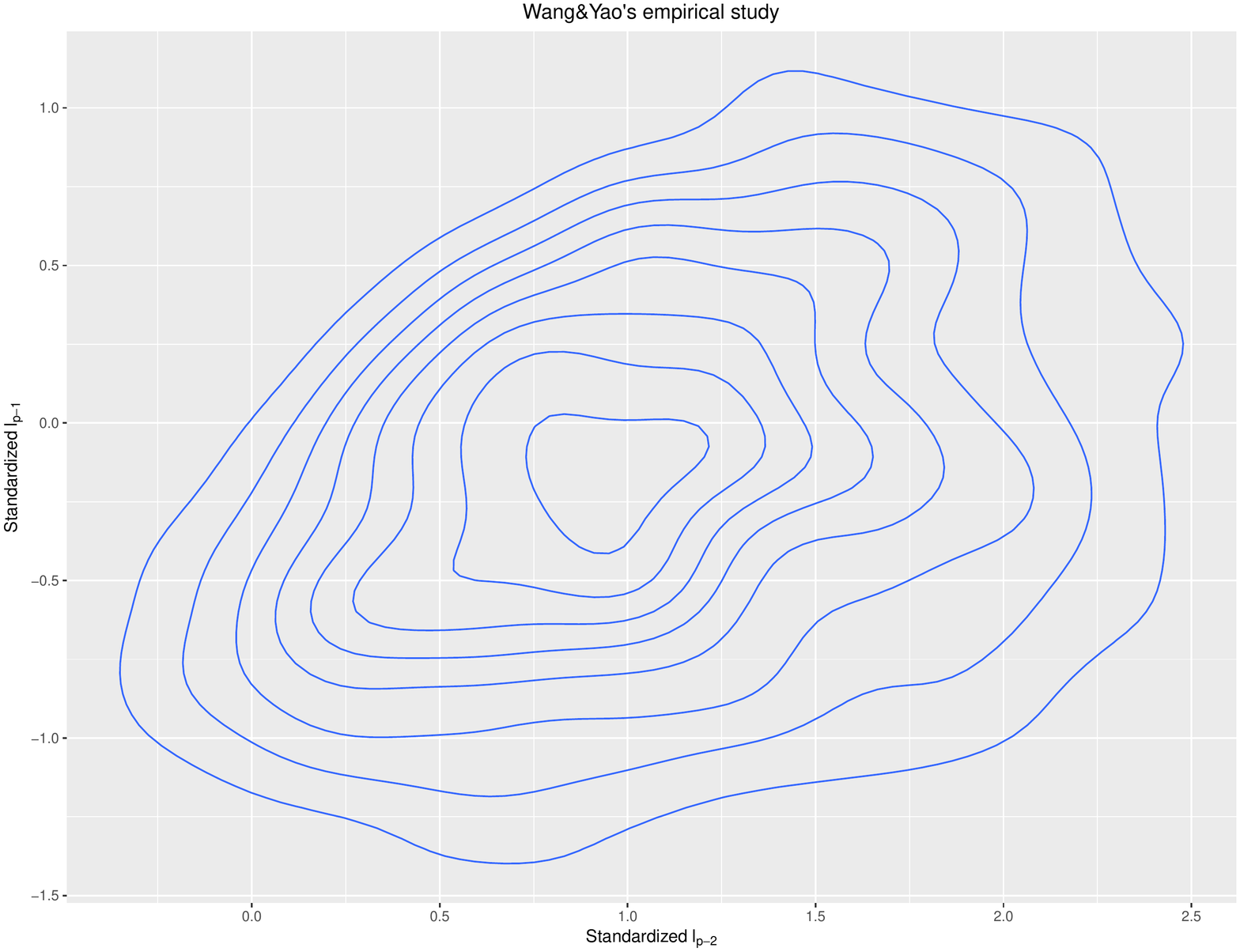}\\
\caption{ Case I under Gaussian assumption. Upper panels show that the  Q-Q plots for the proposed $\gamma_1$ and $\gamma_p$, as well as  the empirical densities of $\gamma_1$ and $\gamma_p$  (solid lines) comparing to their Gaussian limits (dashed lines). Lower panels show three  contour plots: the first is the proposed empirical joint density function of $(\gamma_{21}, \gamma_{22})$; the second is their corresponding limits; the third is the  empirical joint density function of
standardized $l_{p-2}$ and $l_{p-1}$.
 }\label{fig:1}
\end{center} 
\end{figure}

 \subsection{Case I under Binomial  Assumption}
 Continue to use the assumptions in {\rm \bf Case~I}, but   $x_{ij}$ and $y_{ij}$ are from Binomial  assumption. 
 Then by the Theorem~\ref{rmk2}, similarly we have 
\begin{itemize}
\item First, for  the largest  population  spikes $\alpha_1=20$ and sample eigenvalue $l_{1,p}$, 
\[\gamma_{1}= \sqrt{p-4}\Big(\frac{l_{p,1}(\bF)}{\psi_{n,1}}-1\Big) \rightarrow N(0, \sigma_1^2)\]
where 
\[\psi_{n,1}=42.667;\quad  \sigma_1^2=1.116 .\]

For
the least  population  spikes $\alpha_p=0.1$  and sample eigenvalues   $l_{p,p}$, we have 
\[\gamma_3= \sqrt{p-4}\Big(\frac{l_{p,p}(\bF)}{\psi_{n,3}}-1\Big) \rightarrow N(0, \sigma_3^2)\]
where 
\[\psi_{n,3}= 0.0737;\quad  \sigma_3^2=0.180.\] 

\item Second, for  the population  spikes $\alpha_2=0.2$ with multiplicity 2, and the sample eigenvalue $l_{1,p-1}$ and
 $l_{1,p-2}$, it is obtained that the two-dimensional random vector 
\[\gamma_2=\left(\gamma_{2,1}, \gamma_{2,2}\right)'=\left( \sqrt{p-4}\Big(\frac{l_{p,p-2}(\bF)}{\psi_{n,2}}-1\Big),
\sqrt{p-4}\Big(\frac{l_{p,p-1}(\bF)}{\psi_{n,2}}-1\Big)\right)' \]
converges to the eigenvalues of random matrix $-\frac1{\kappa_s}\left[\Omega_{\psi_{2}}\right]_{22}$,
where  $\psi_{n,2}=0.13$, $\kappa_s=1.433$ for the spike $\alpha_2=0.2$.  Furthermore, the
matrix 
$\left[\Omega_{\psi_{2}}\right]_{22}$
 is a $2\times 2$ symmetric matrix with the independent Gaussian entries, of which the $(s,t)$ element has mean zero and the variance given by 
 \[var(w_{st})=
\left\{
\begin{array}{cc}
0.264, &~ \text{if}~ s =t  \\
 1.160,  &  ~ \text{if}~ s \neq t  
\end{array}
\right.
\]
 \end{itemize}
Figure~2 depicts 
the simulated empirical distributions of the spiked eigenvalues  from binomial population under {\rm \bf Case~I} comparing with their  limiting distributions and the results from Wang and Yao (2017). 

 \begin{figure}[htbp]
\begin{center}
\includegraphics[width = .37\textwidth]{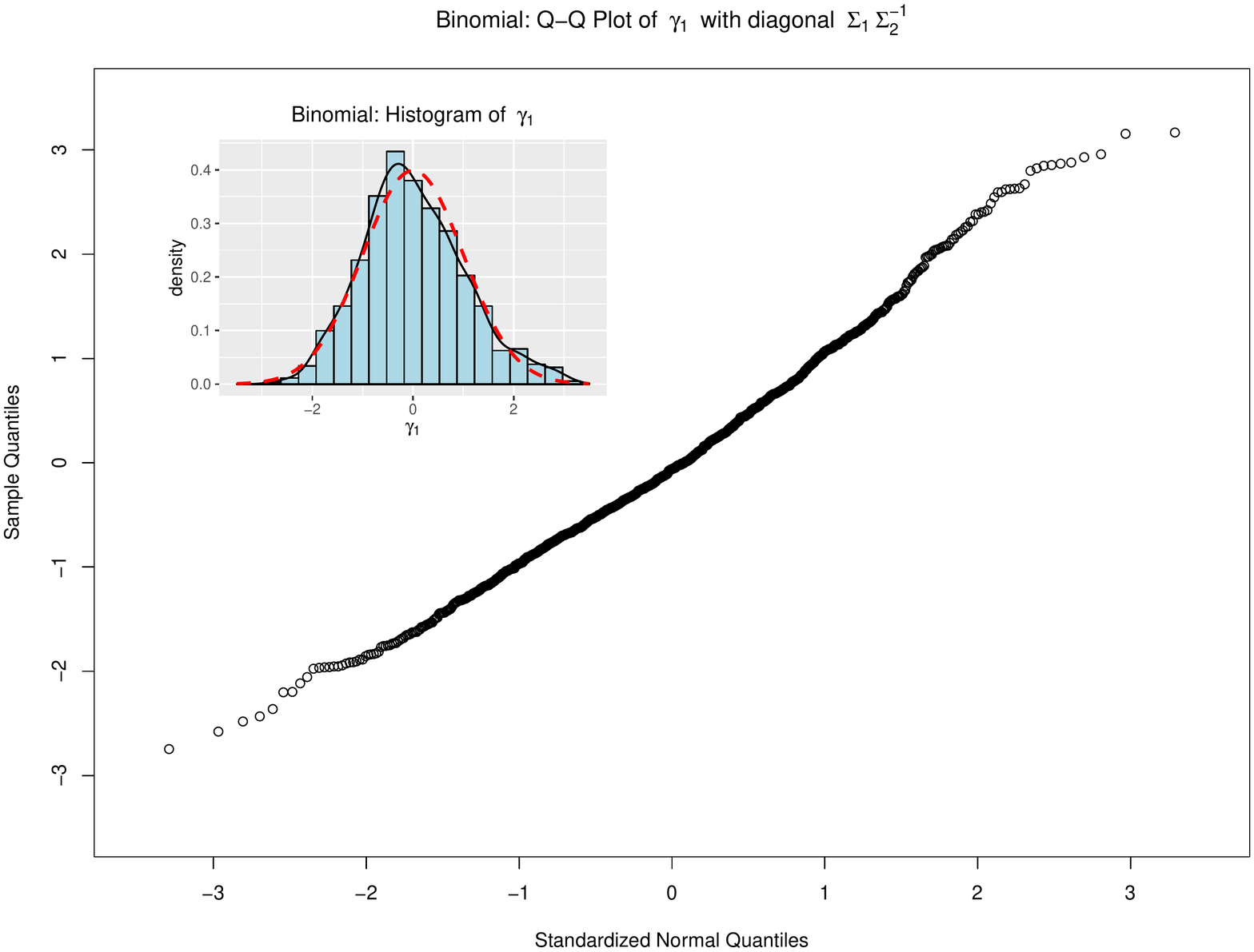}\quad\quad 
\includegraphics[width = .37\textwidth]{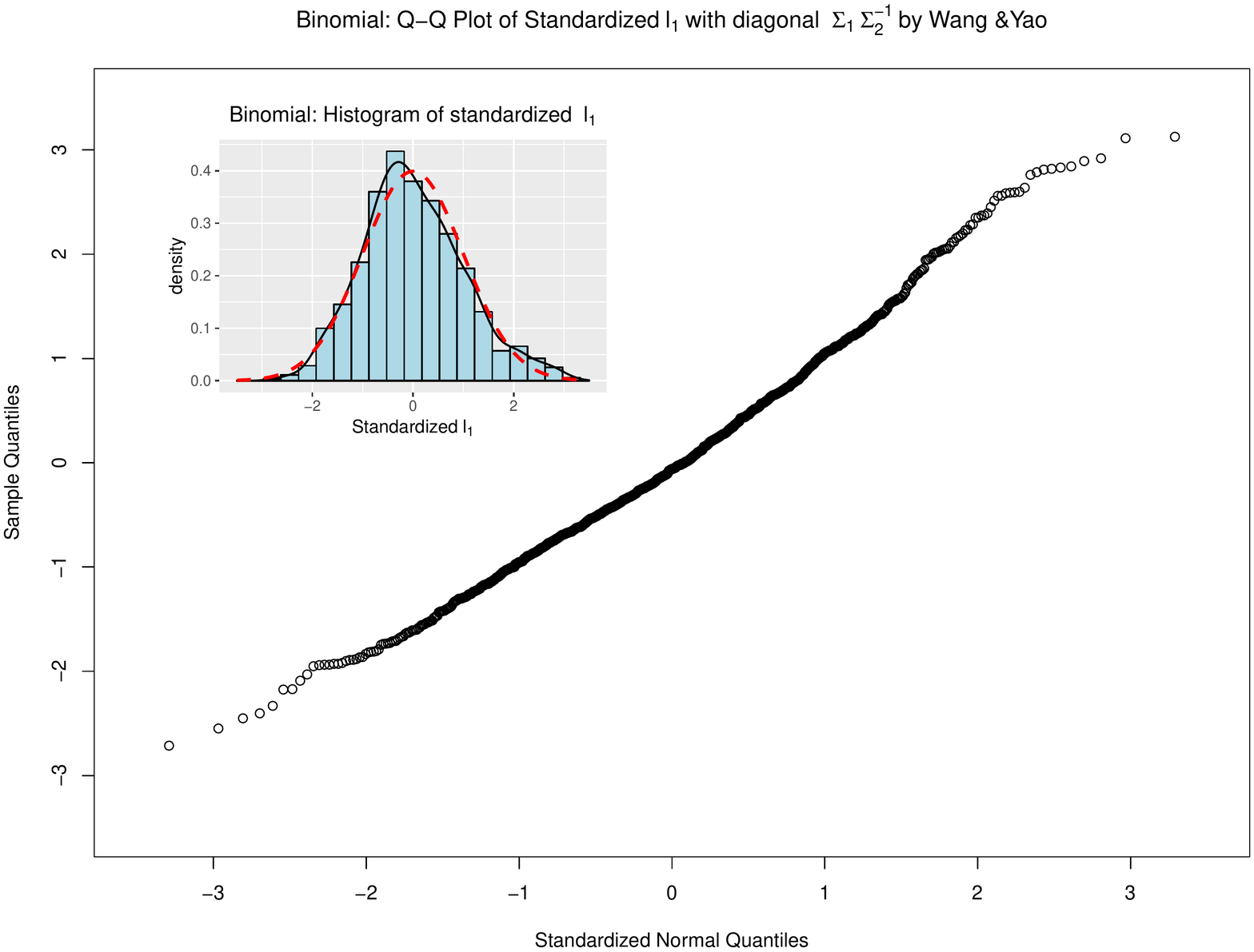}\\
\includegraphics[width = .37\textwidth]{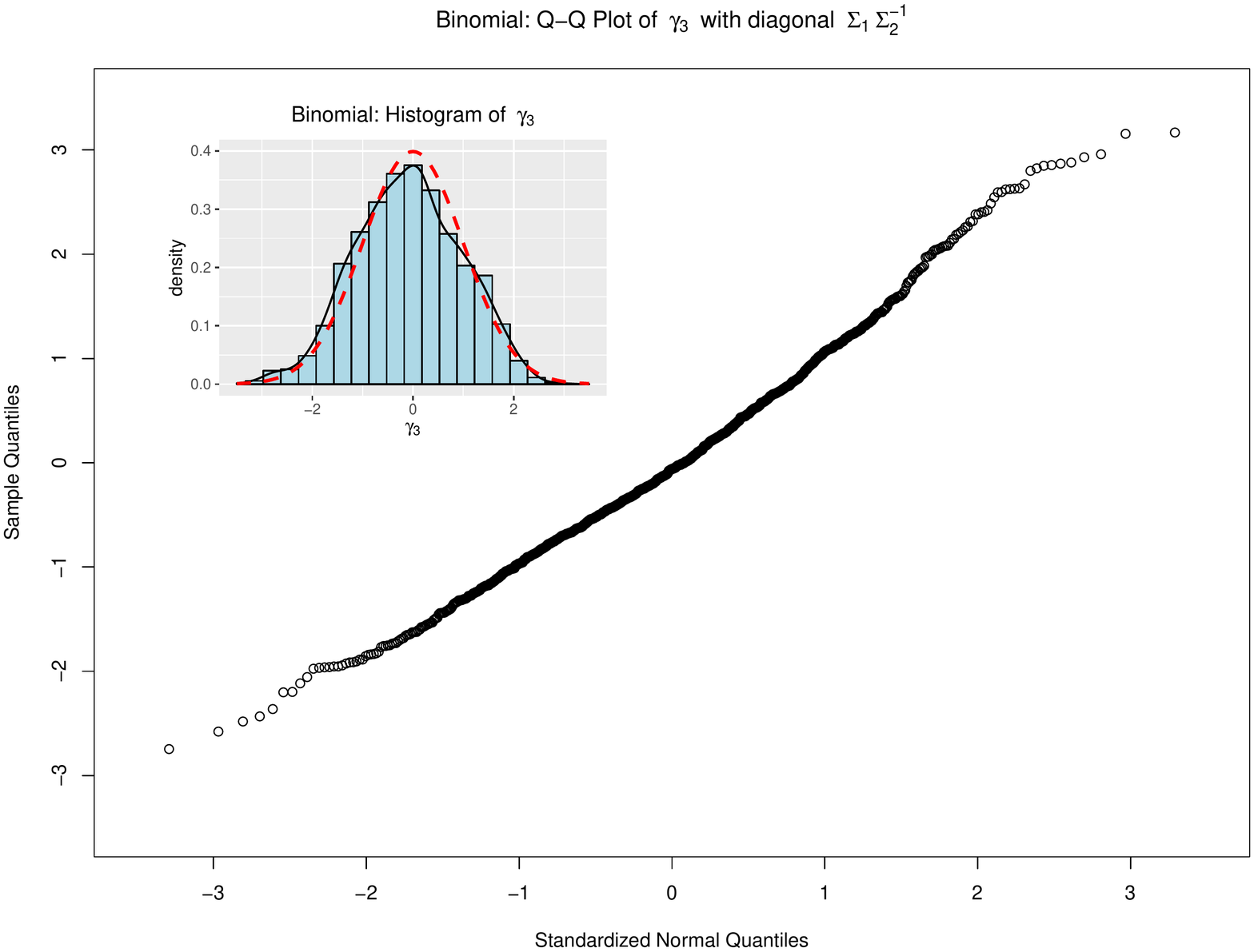}\quad\quad 
\includegraphics[width = .37\textwidth]{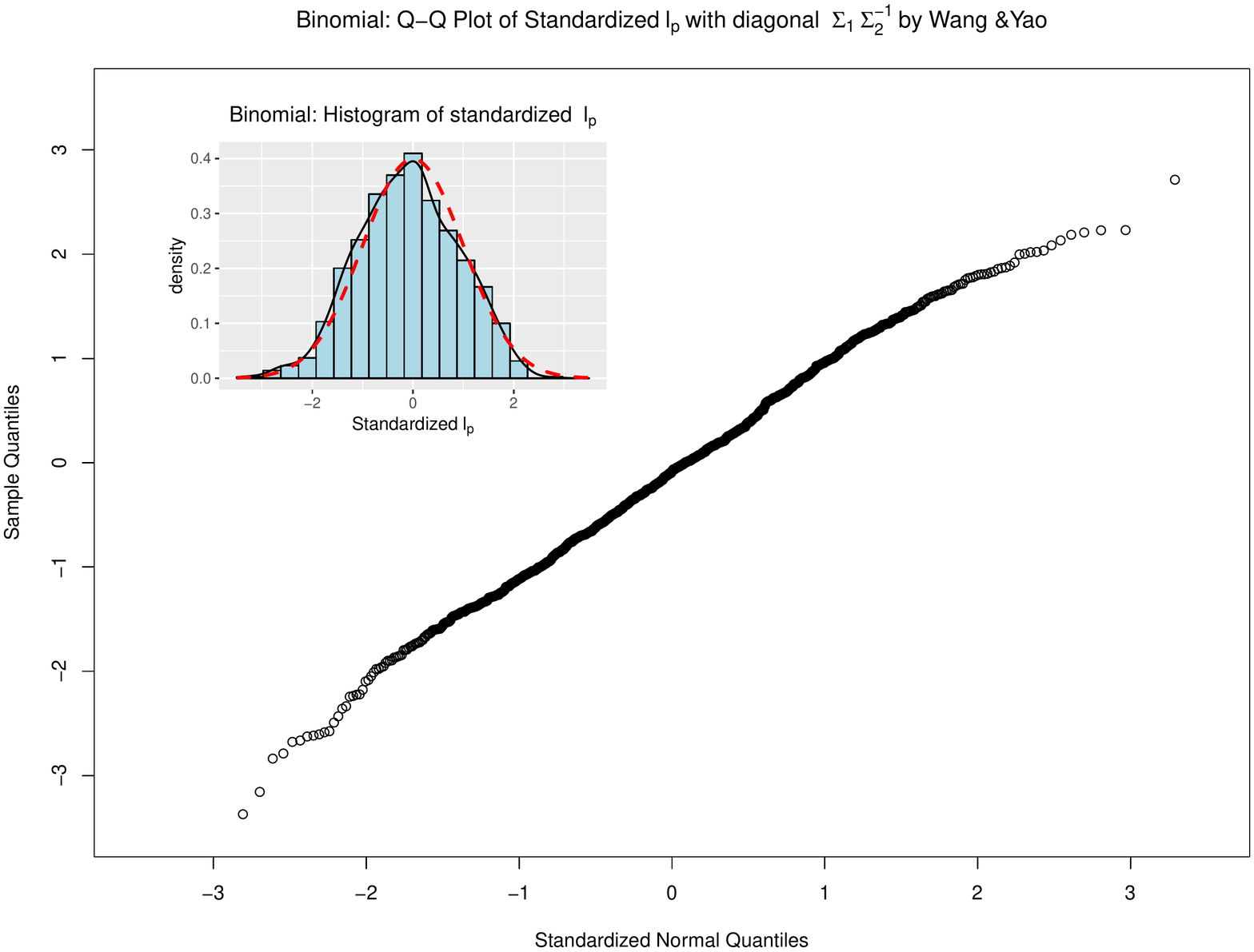}\\
\includegraphics[width = .3\textwidth]{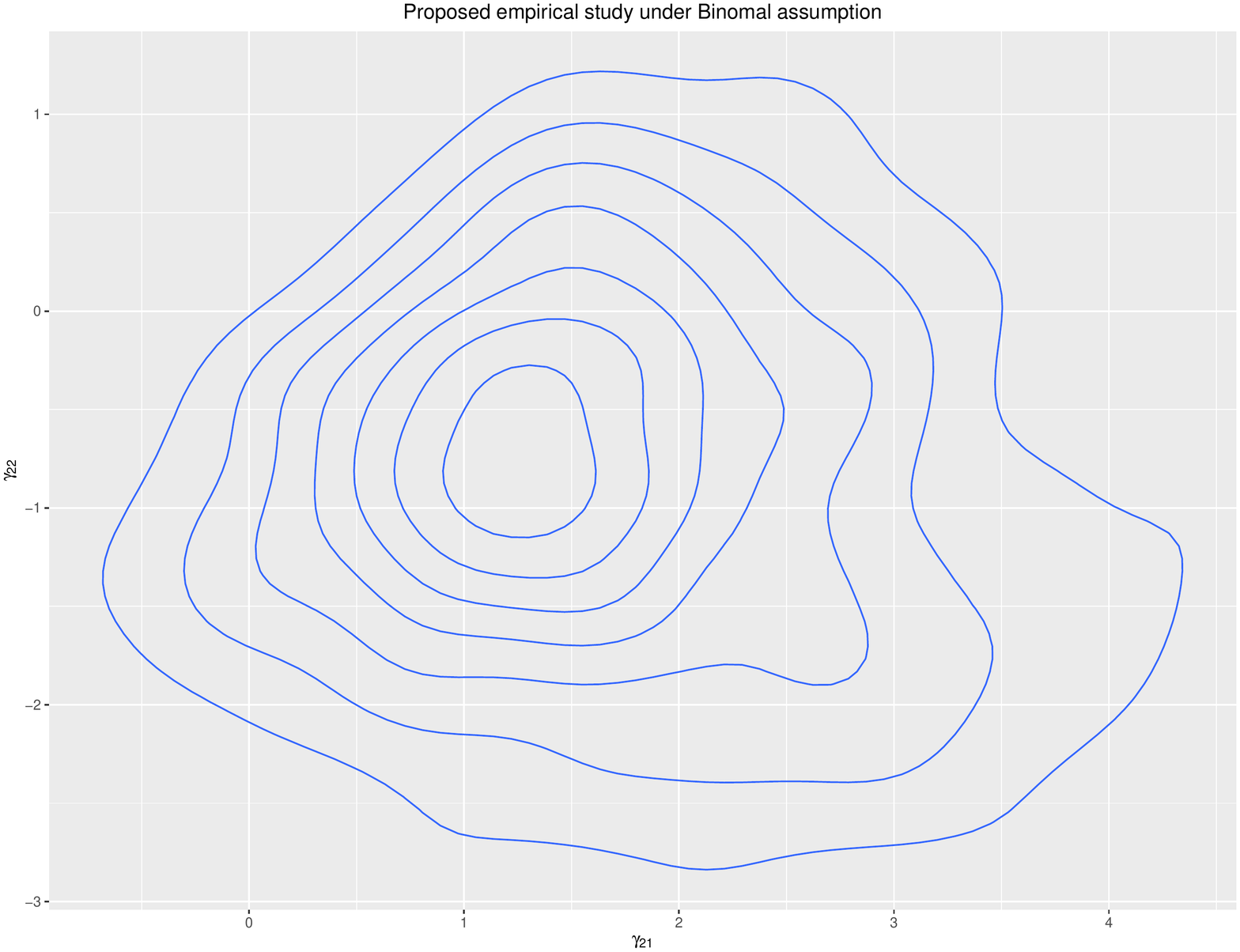}\quad \includegraphics[width = .3\textwidth] {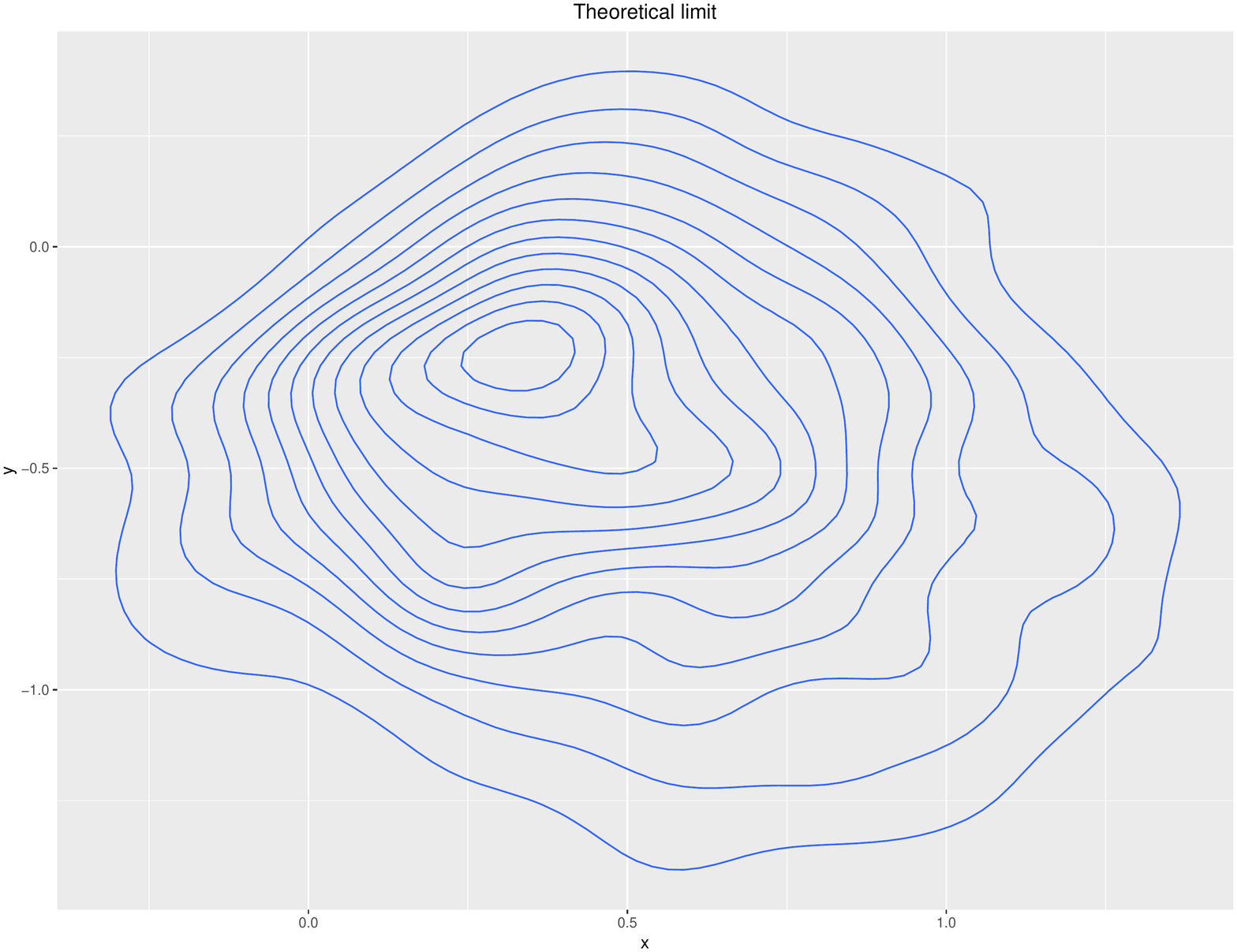}\quad \includegraphics[width = .3\textwidth] {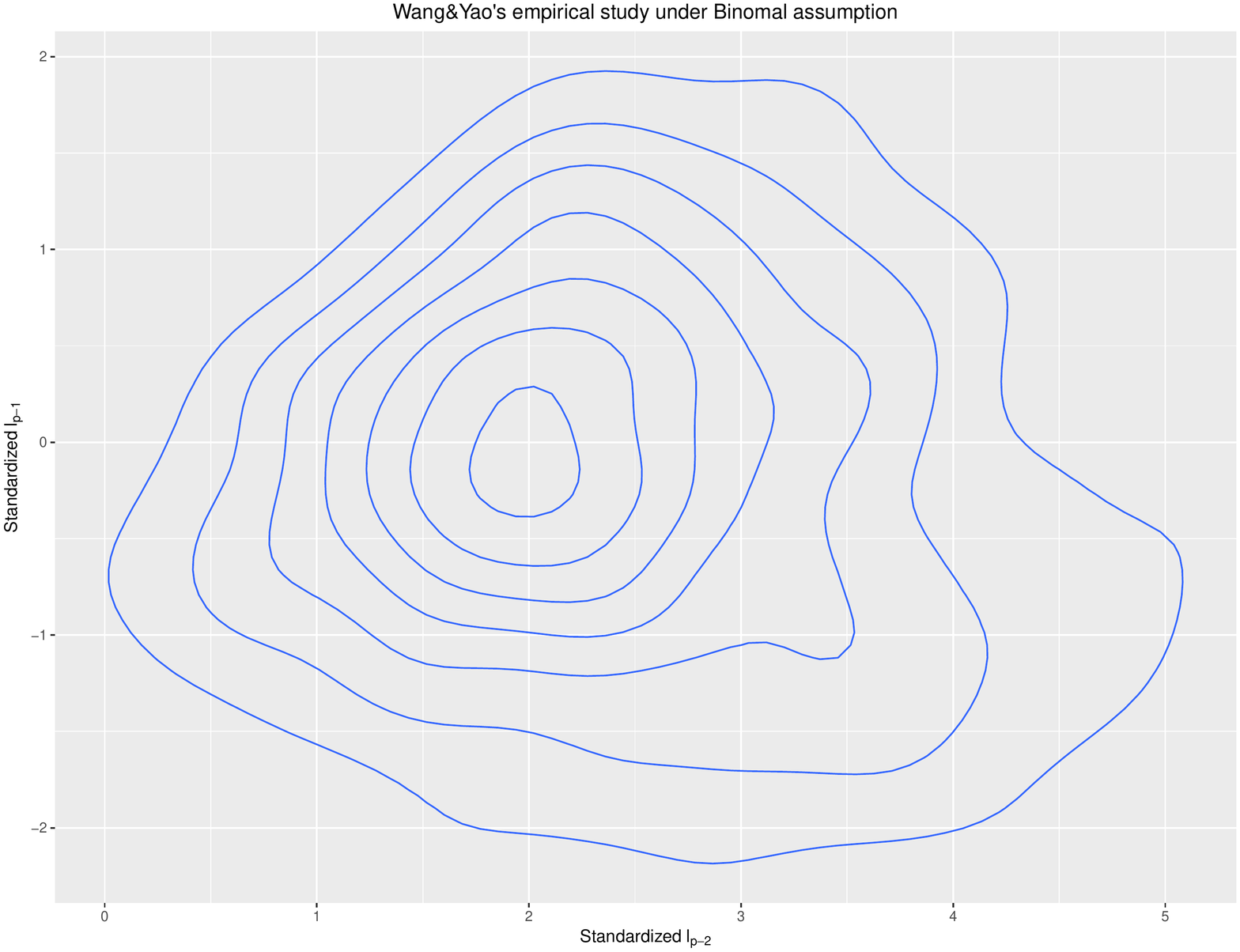}\\
\caption{ Case I under Binomial assumption. Upper panels show that the  Q-Q plots for the proposed $\gamma_1$ and $\gamma_p$, as well as  the empirical densities of $\gamma_1$ and $\gamma_p$  (solid lines) comparing to their Gaussian limits (dashed lines). Middle panels are the corresponding comparison of the empirical density of  standardized $l_{1}$ and $l_p$ in Wang and Yao (2017).  Lower panels show three  contour plots: the first is the proposed empirical joint density function of $(\gamma_{21}, \gamma_{22})$; the second is their corresponding limits; the third is the  empirical joint density function of
standardized $l_{p-2}$ and $l_{p-1}$.
 }\label{fig:2}
\end{center} 
\end{figure}

 \subsection{{\rm \bf Case~II} under all  Assumptions}

For the {\rm \bf Case~II}, the simulations show that our proposed results are the same to the one of Normal assumption under {\rm \bf Case~I}  for all the population distribution assumptions by Theorem~{\ref{CLT}. The simulated results of three assumptions under {\rm \bf Case~II} are respectively depicted in Figures~\ref{fig:4}-\ref{fig:5}.  

 \begin{figure}[htbp]
\begin{center}
\includegraphics[width = .37\textwidth]{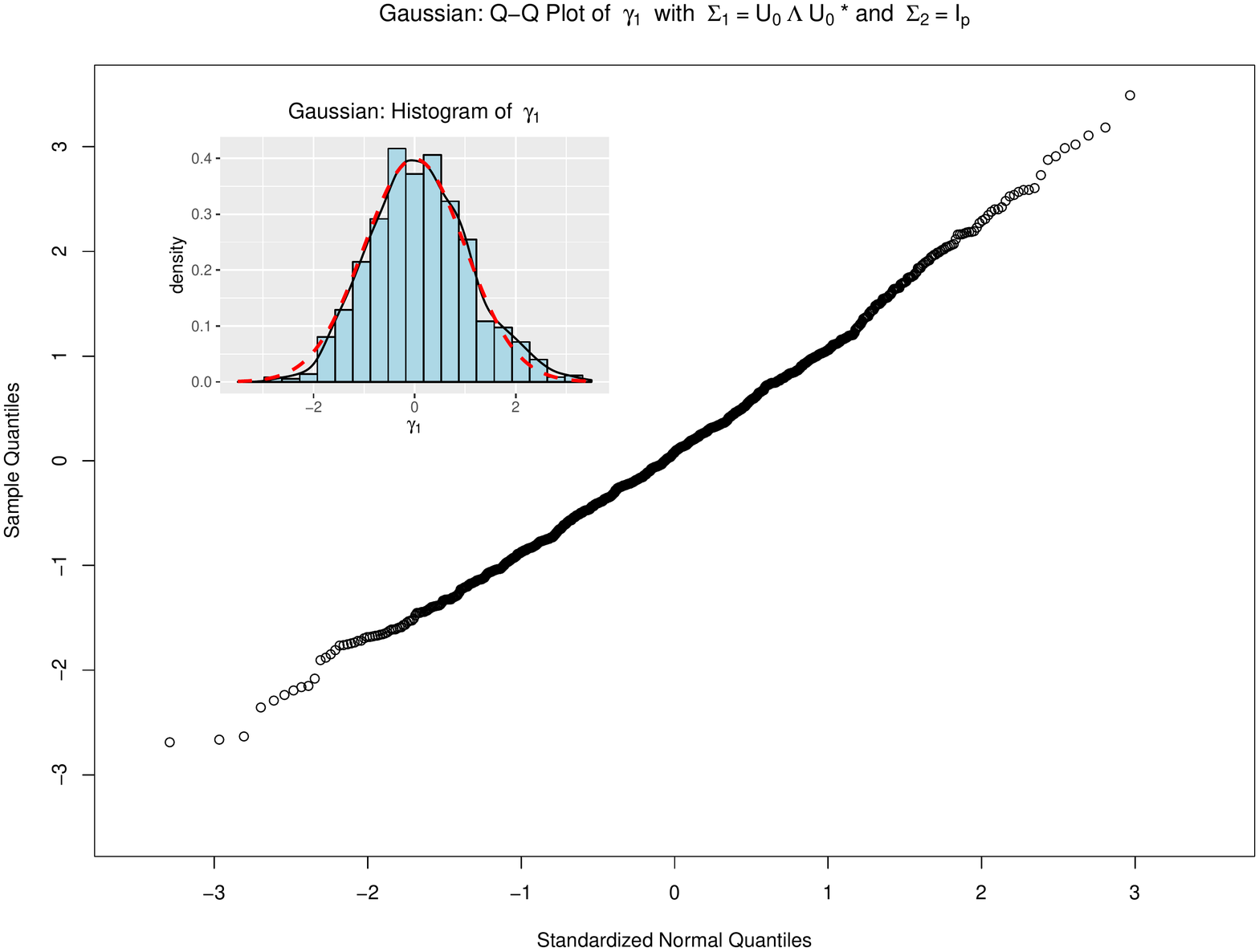}\quad\quad 
\includegraphics[width = .37\textwidth]{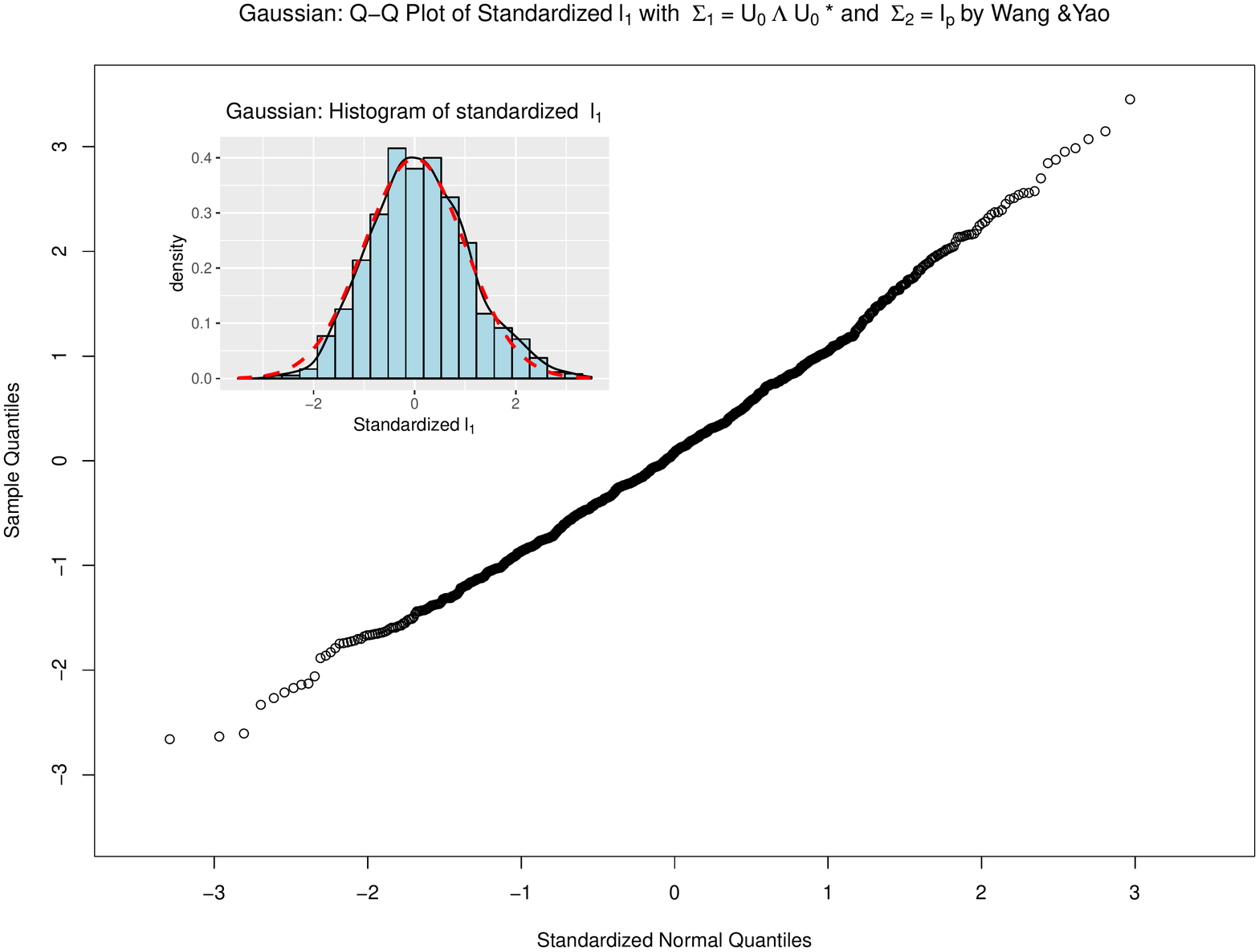}\\
\includegraphics[width = .37\textwidth]{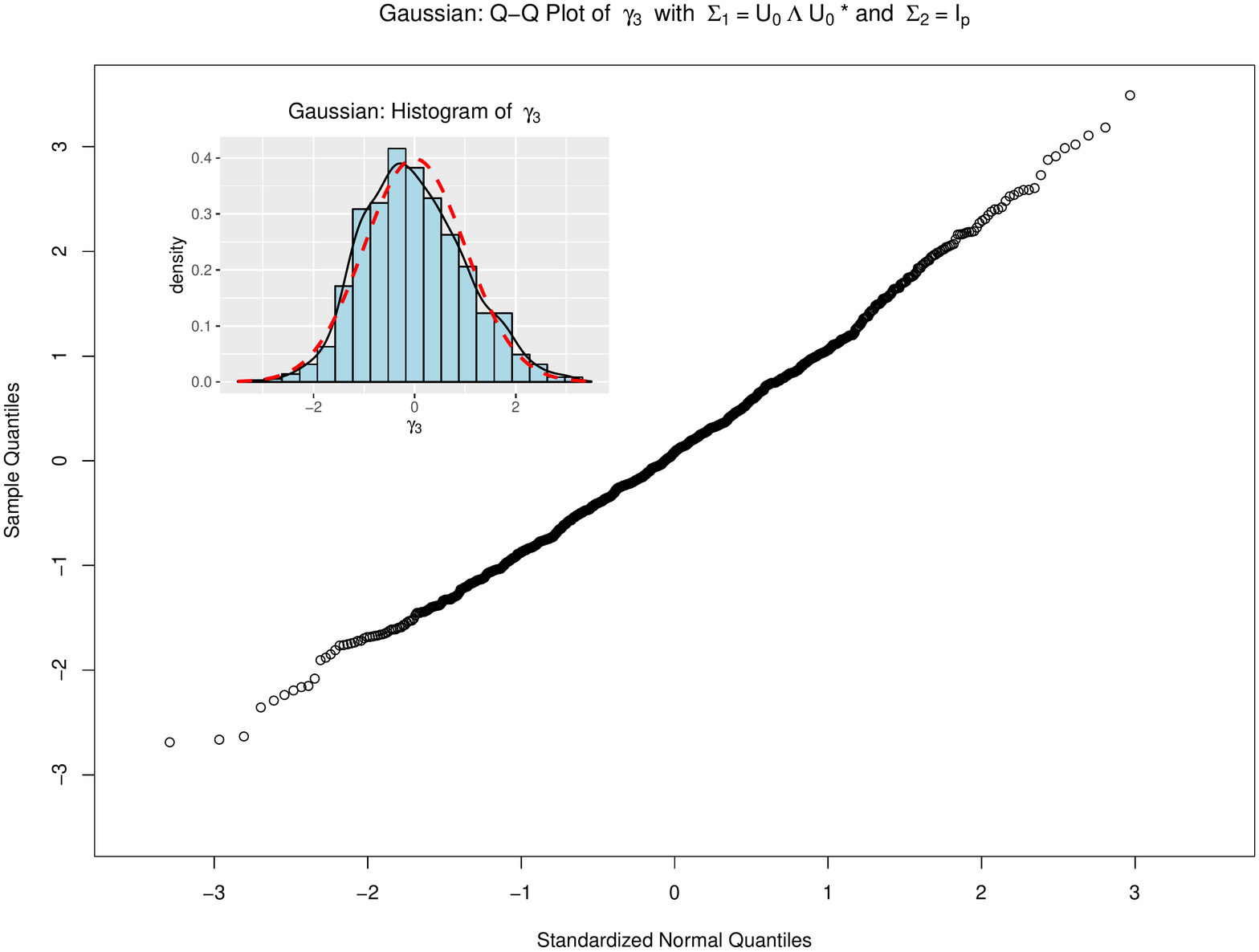}\quad\quad 
\includegraphics[width = .37\textwidth]{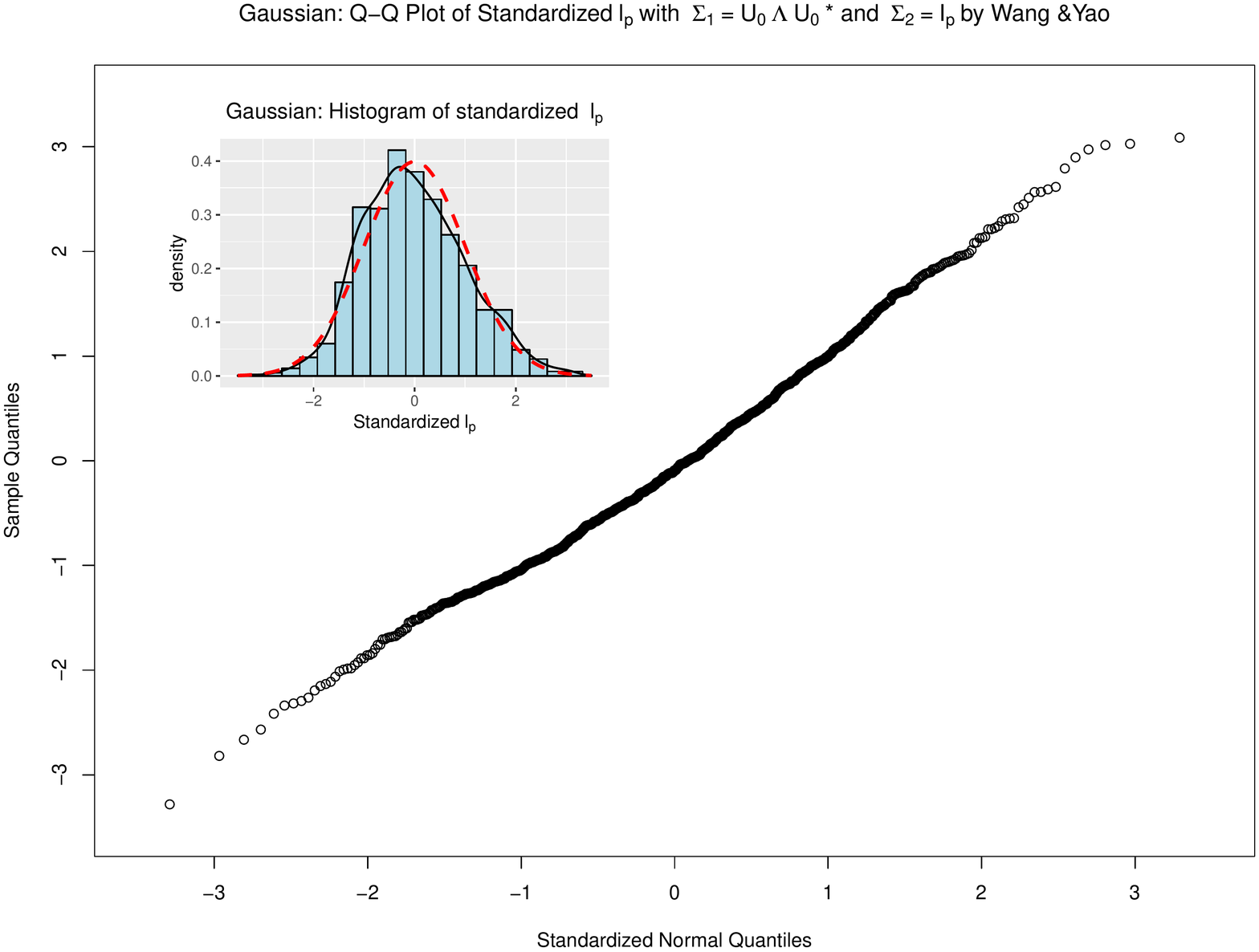}\\
\includegraphics[width = .3\textwidth]{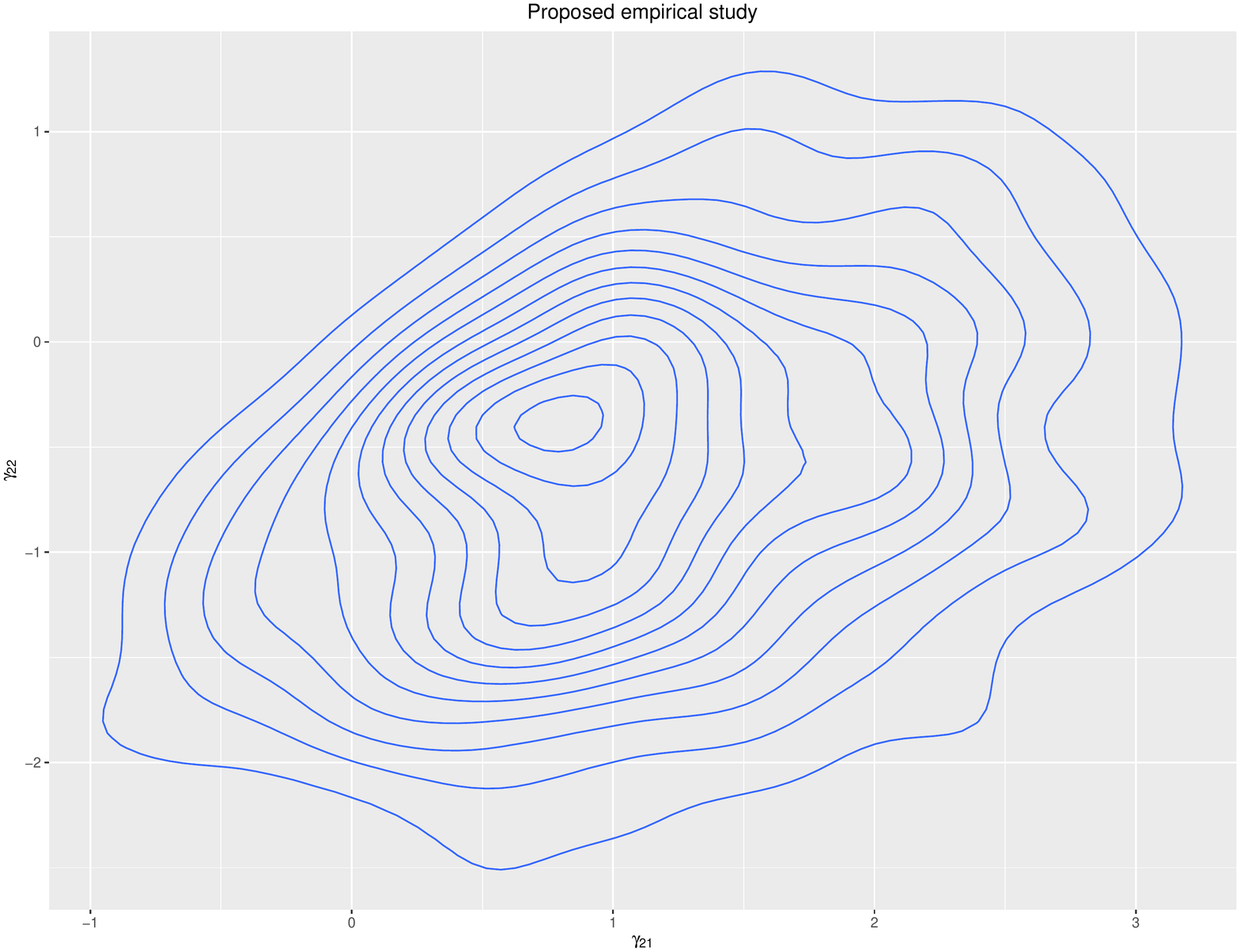}\quad \includegraphics[width = .3\textwidth] {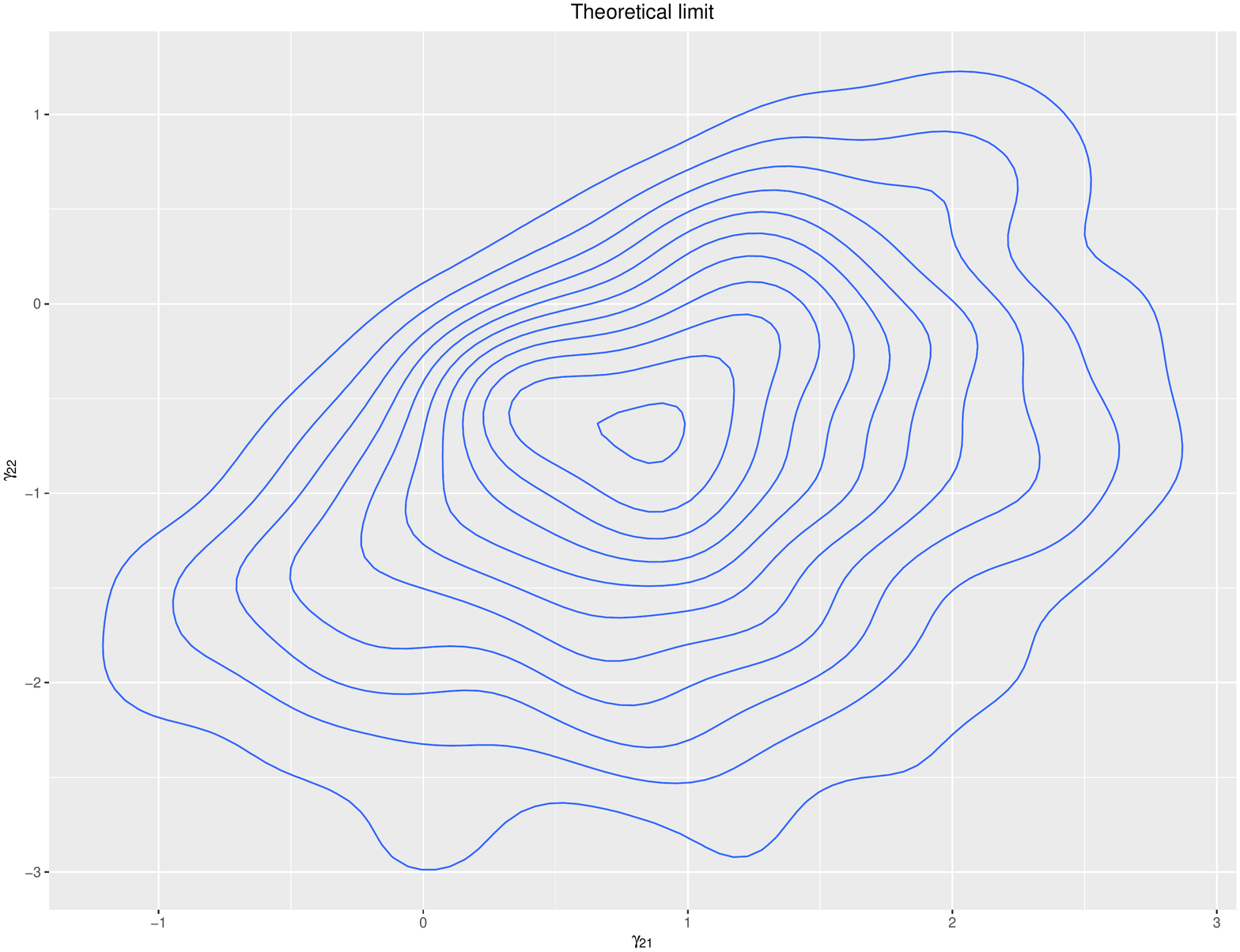}\quad \includegraphics[width = .3\textwidth] {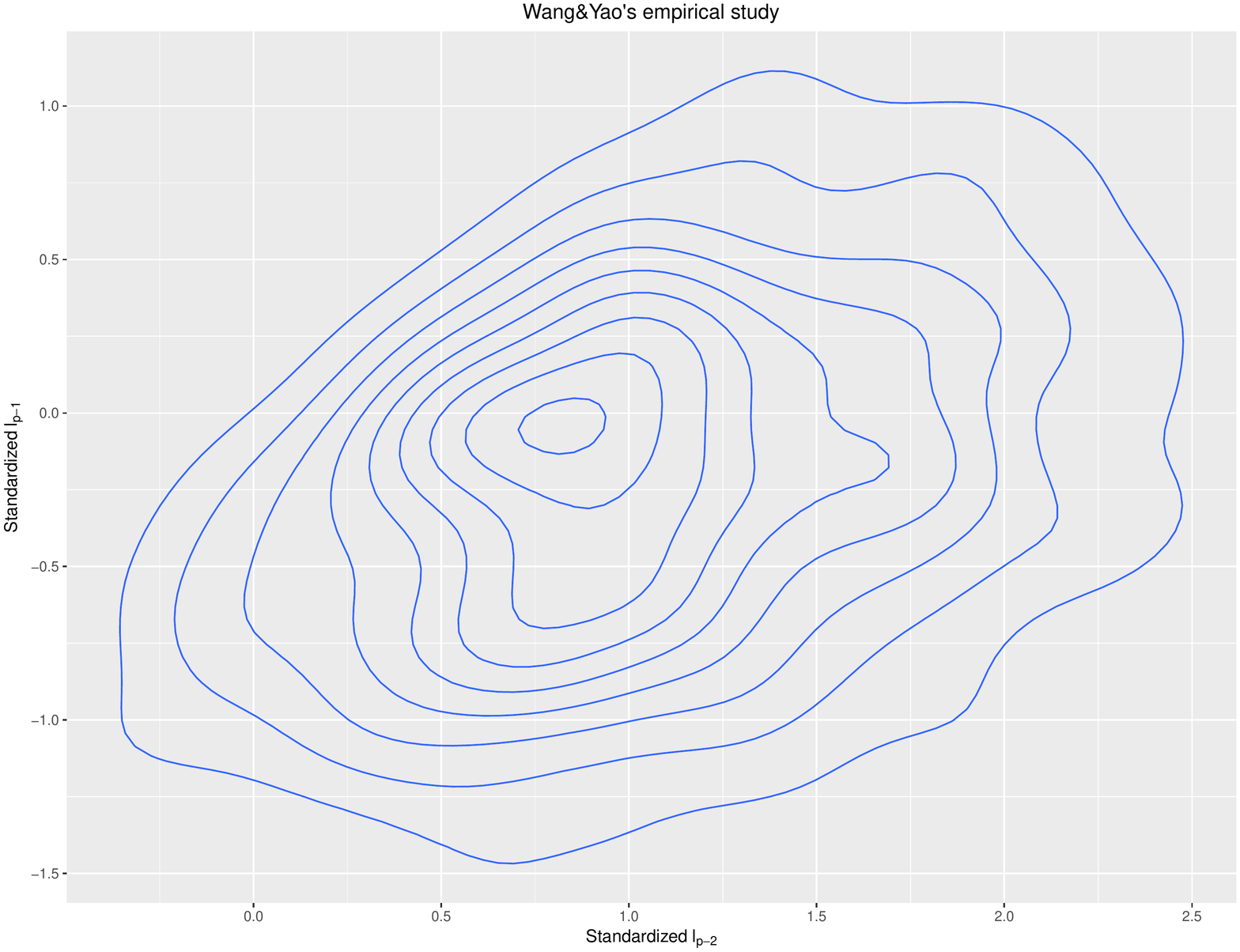}\\
\caption{ {\rm \bf Case~II} under Gaussian assumption. Upper panels show that the  Q-Q plots for the proposed $\gamma_1$ and $\gamma_3$, as well as  the empirical densities of $\gamma_1$ and $\gamma_p$  (solid lines) comparing to their Gaussian limits (dashed lines). Middle panels are the corresponding comparison of the empirical density of  standardized $l_{1}$ and $l_p$ in Wang and Yao (2017).  Lower panels show three  contour plots: the first is the proposed empirical joint density function of $(\gamma_{21}, \gamma_{22})$; the second is their corresponding limits; the third is the  empirical joint density function of
standardized $l_{p-2}$ and $l_{p-1}$.
 }\label{fig:4}
\end{center} 
\end{figure}

 \begin{figure}[htbp]
\begin{center}
\includegraphics[width = .37\textwidth]{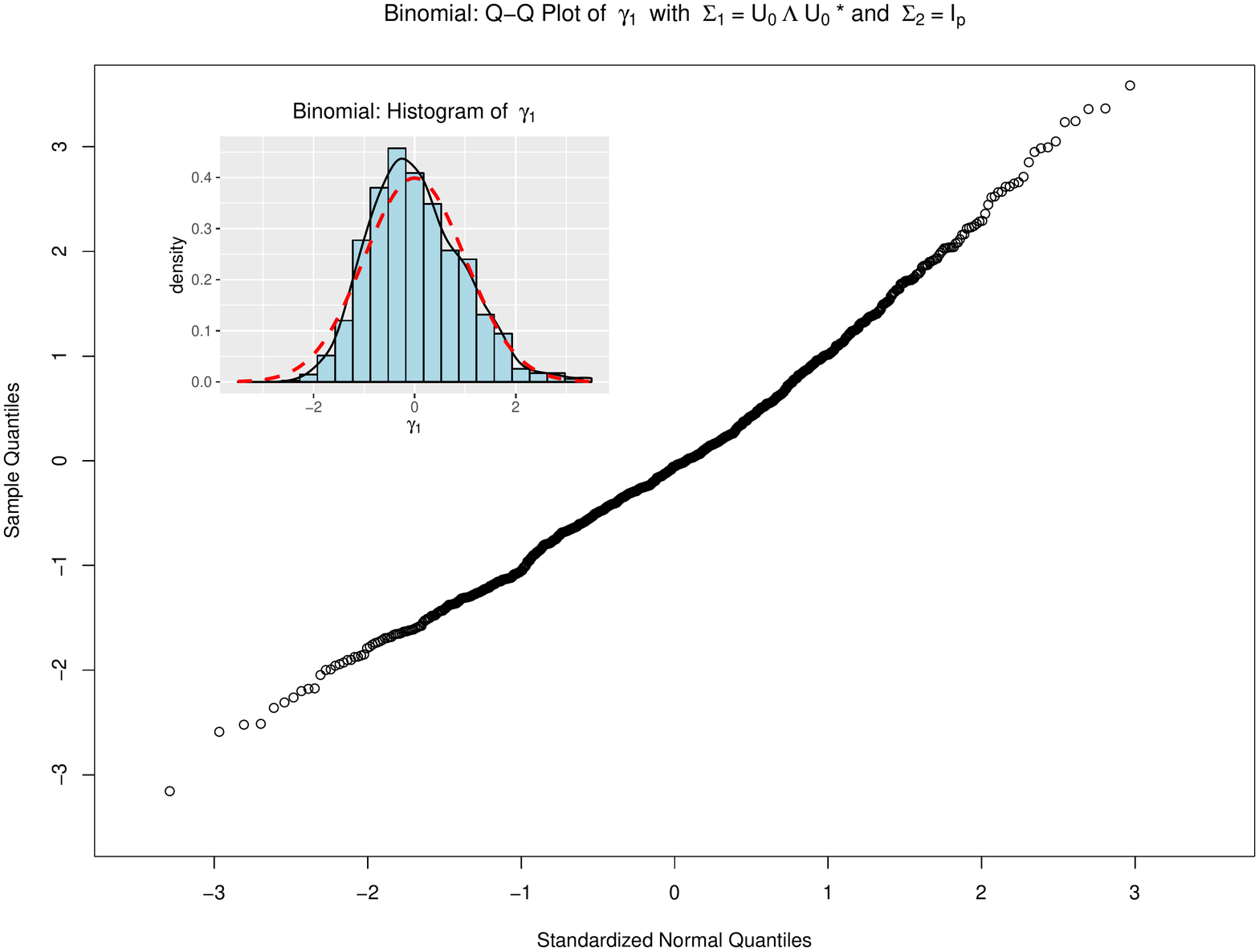}\quad\quad 
\includegraphics[width = .37\textwidth]{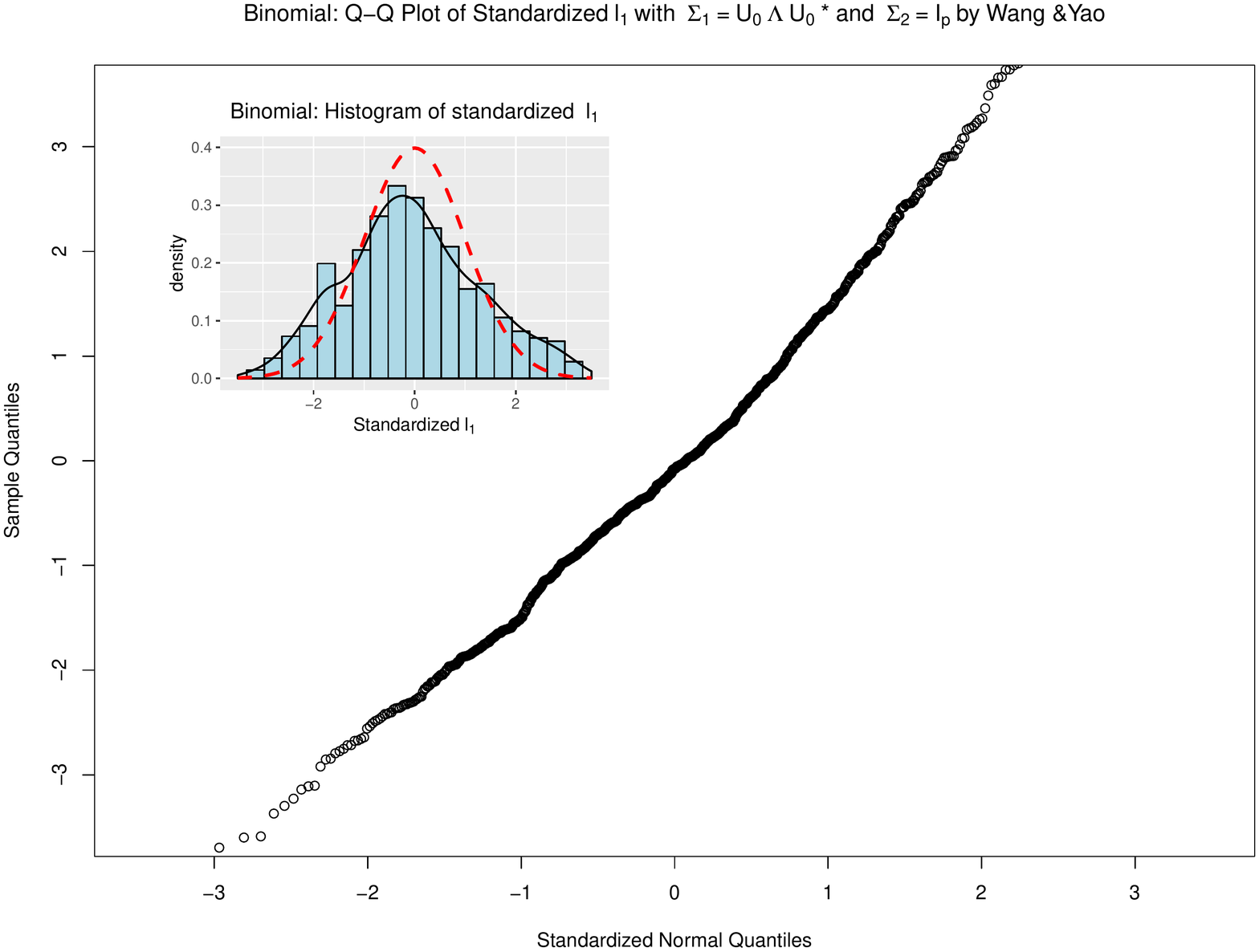}\\
\includegraphics[width = .37\textwidth]{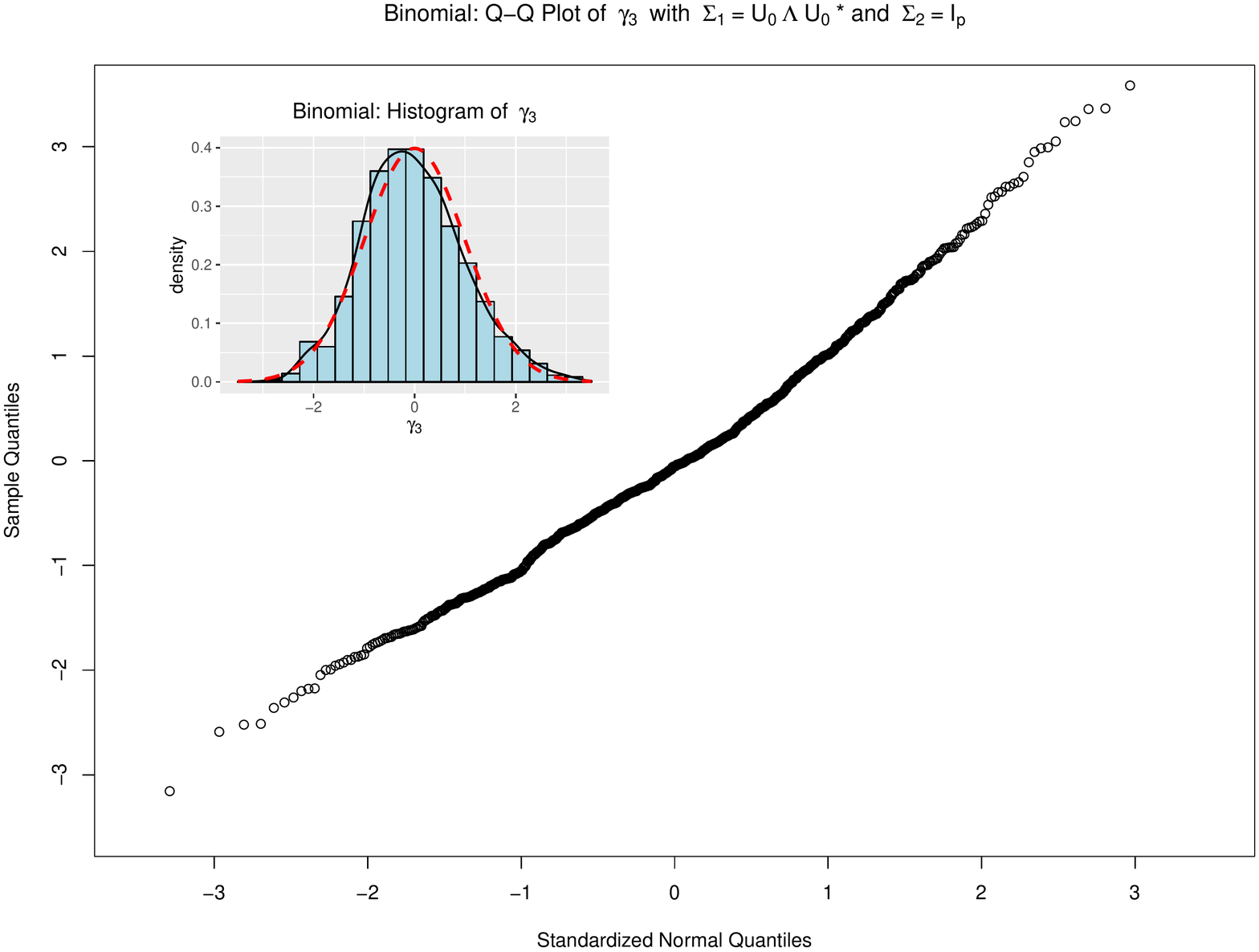}\quad\quad 
\includegraphics[width = .37\textwidth]{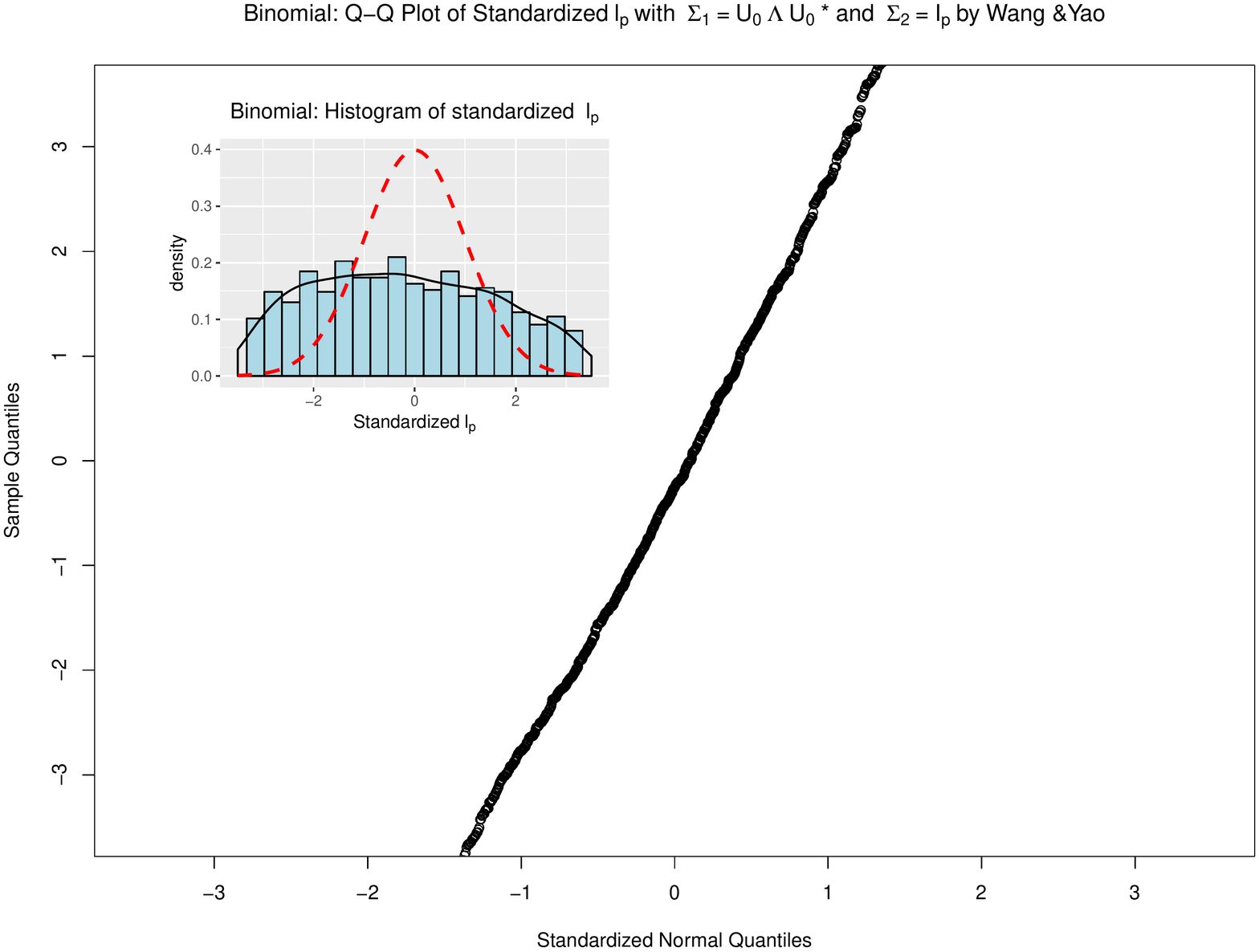}\\
\includegraphics[width = .3\textwidth]{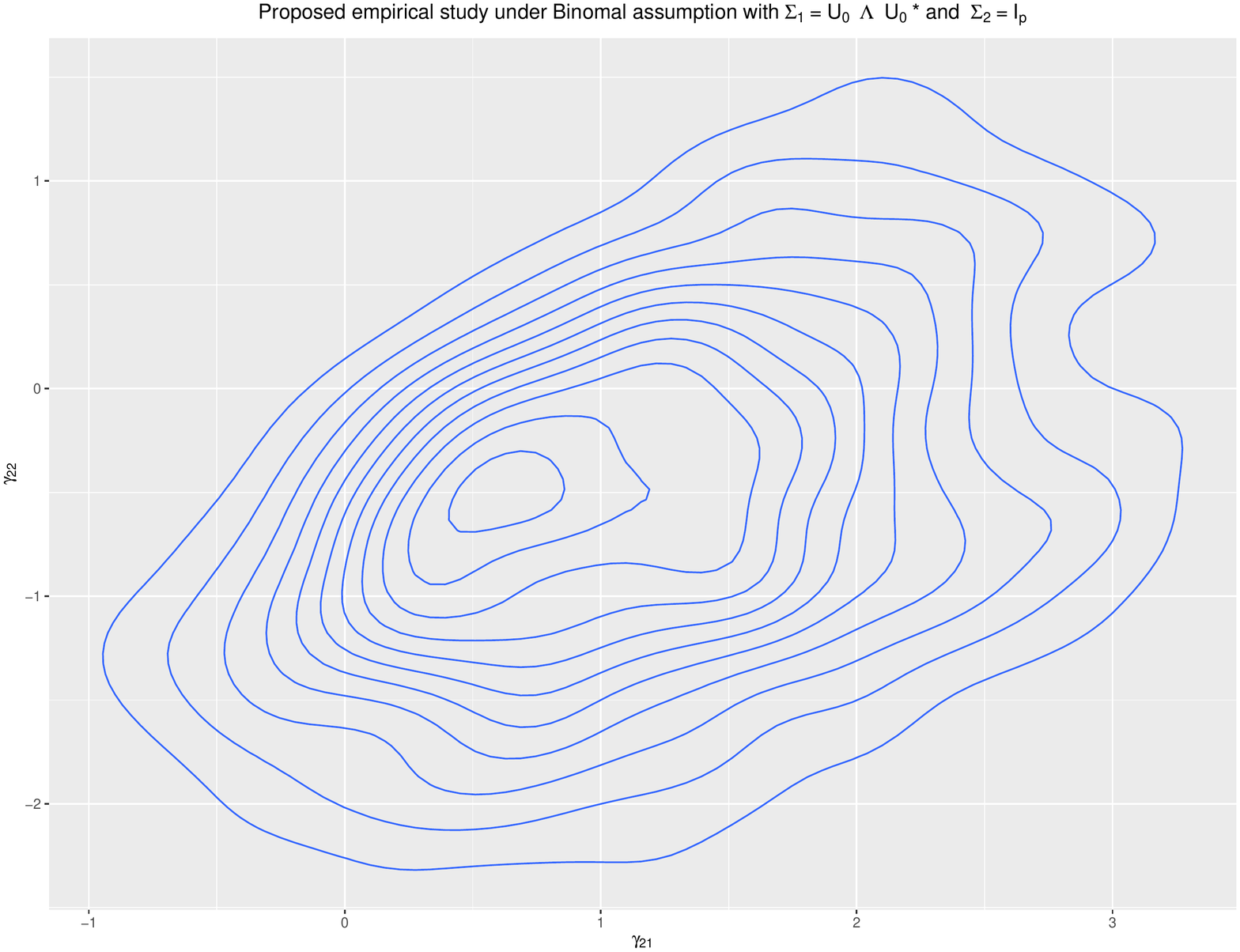}\quad \includegraphics[width = .3\textwidth] {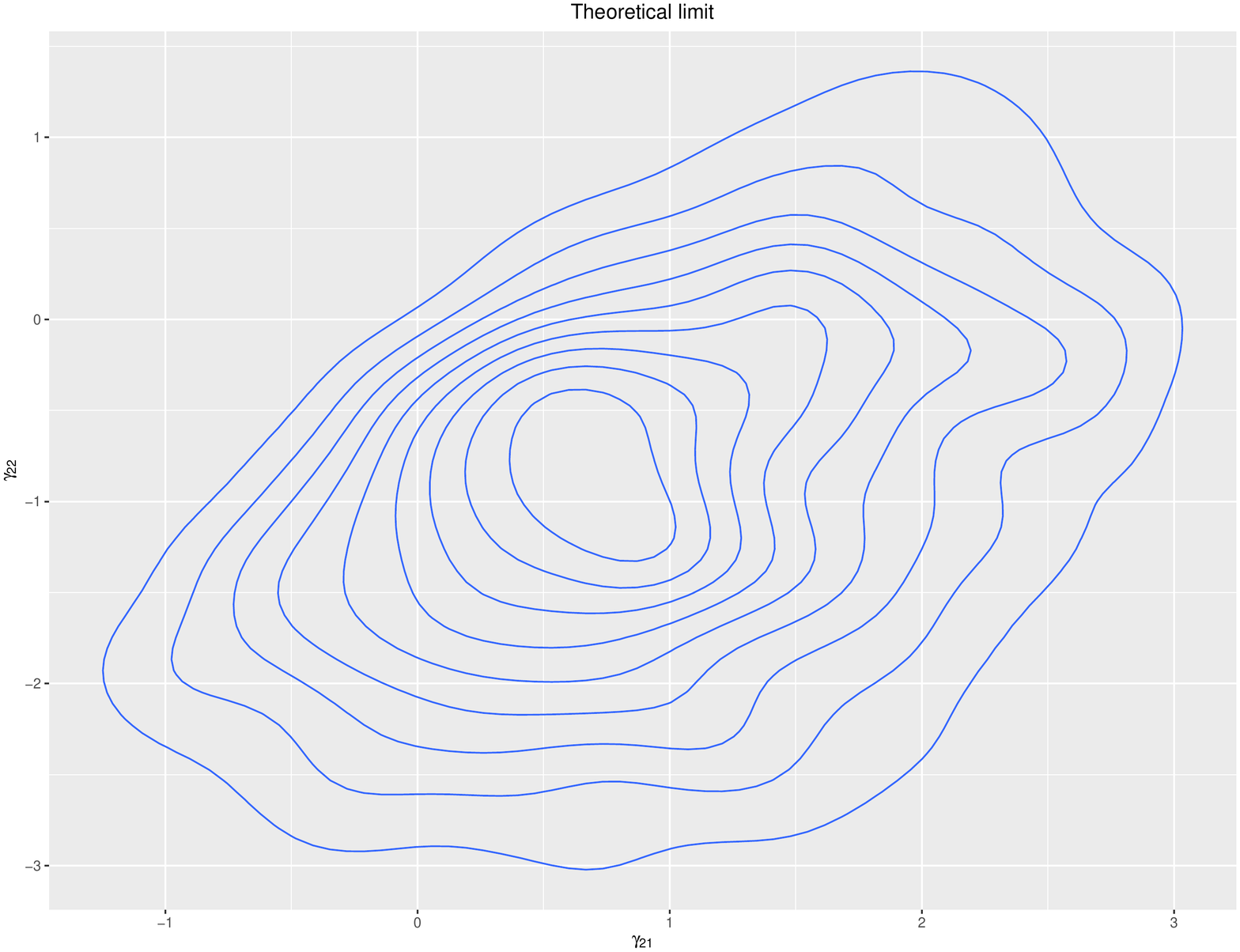}\quad \includegraphics[width = .3\textwidth] {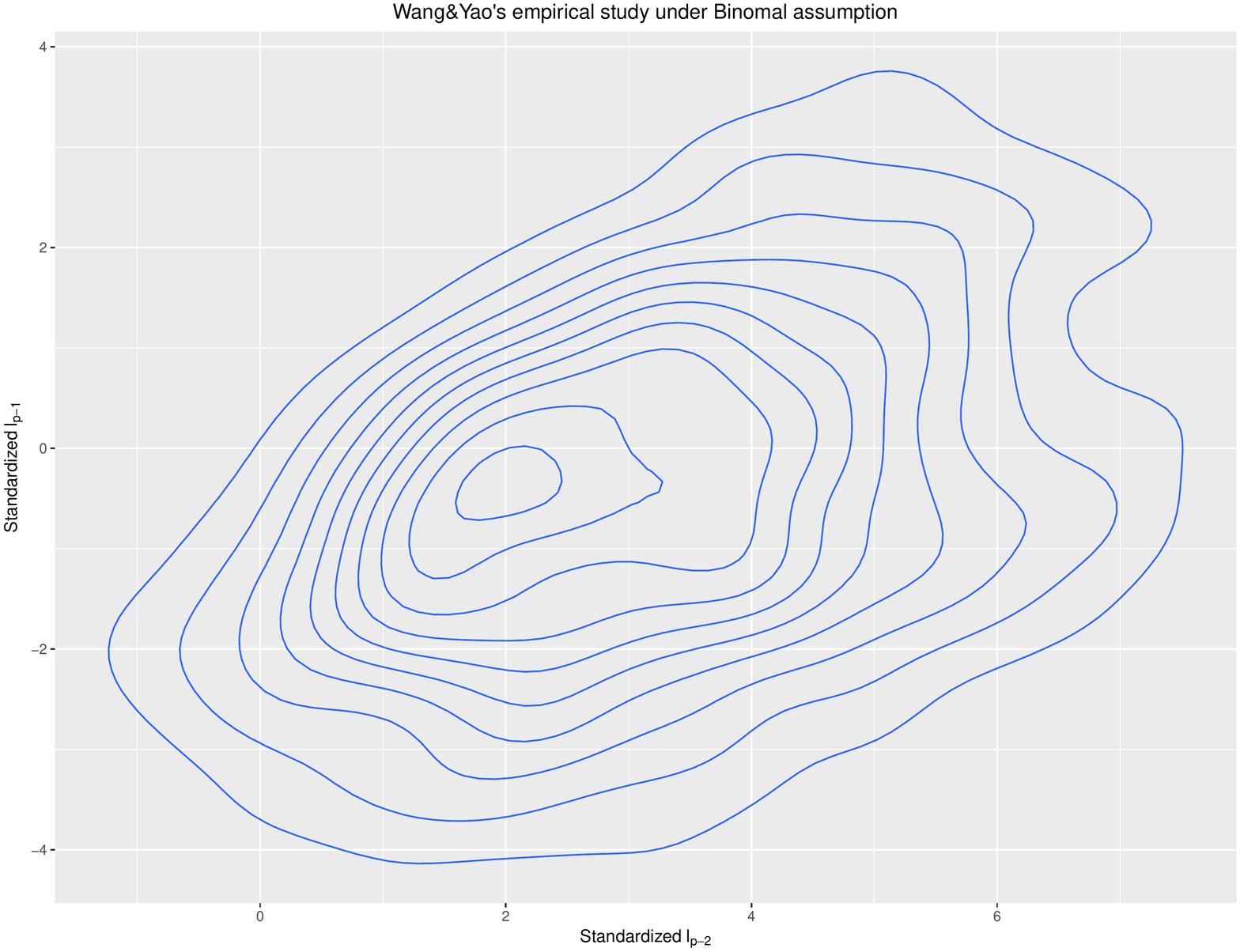}\\
\caption{ {\rm \bf Case~II} under Binomal assumption. Upper panels show that the  Q-Q plots for the proposed $\gamma_1$ and $\gamma_3$, as well as  the empirical densities of $\gamma_1$ and $\gamma_p$  (solid lines) comparing to their Gaussian limits (dashed lines). Middle panels are the corresponding comparison of the empirical density of  standardized $l_{1}$ and $l_p$ in Wang and Yao (2017).  Lower panels show three  contour plots: the first is the proposed empirical joint density function of $(\gamma_{21}, \gamma_{22})$; the second is their corresponding limits; the third is the  empirical joint density function of
standardized $l_{p-2}$ and $l_{p-1}$.
 }\label{fig:5}
\end{center} 
\end{figure}


As seen from the Figures~\ref{fig:4}-\ref{fig:5}, our proposed method performs well for both of the  population assumptions under  {\rm \bf Case~II}, but the method of  \cite{WangYao2017}  provides inaccurate variances  for all the non-Gaussian assumptions under  {\rm \bf Case~II} because the assumption of diagonal block independence is not met.


\section{Conclusion}\label{Con}

In this paper,  a G4MT for a generalized spiked Fisher matrix is proposed. By the relaxing the  matching up to 4th moments  condition to a tail probability in Assumption~$\bB$,  which is necessary for the existence of the largest eigenvalue limit, we show that   the universality of the asymptotic law for the spiked eigenvalues of high-dimensional generalized Fisher matrices. 
To  illustrate the basic idea and procedures of the G4MT,  we apply it to the CLT of normalized spiked eigenvalues
related to high-dimensional generalized Fisher matrices as an example. 
Comparing to the previous work on universality, we simplify the calculations of high-order partial derivatives of an implicit function to the entries of the random matrix, avoid he strong condition $C_0$ of sub-exponential property, and further relax the requirements for the bounded 4th moments and diagonal block independent assumption. Instead,  we only need a more regular and minor conditions (\ref{CondU1}) and (\ref{CondU2}) on the elements of $U_1$ and $V_1$, respectively. On the one hand, our result has much wider applications than \cite{WangYao2017};  on the other hand, the result of 
 \cite{WangYao2017}  shows the necessity of the conditions (\ref{CondU1}) and  (\ref{CondU2}).

\section*{Acknowledgements}

\end{document}